\newtheorem{theorem}{Theorem}[section]
\newtheorem{lemma}[theorem]{Lemma}
\newtheorem{proposition}[theorem]{Proposition}
\newtheorem{corollary}[theorem]{Corollary}
\newcommand{\nref}[1]{(\ref{#1})}
\newcommand{\RR}{{\mathbb R}}
\newcommand{\bbeta}{\bm {\eta}}
\newcommand{\norm}[1]{\|#1\|}
\newcommand{\snorm}[1]{| #1 |}
\newcommand{\normAh}[1]{\norm{#1}_{\mathcal{A}}}
\newcommand{\Shnorm}[1]{\lVert#1\rVert_{\traceh}}
\newcommand{\ShnormDG}[1]{\lVert#1\rVert_{\traceh,*}}
\newcommand{\trace}{\Phi} 
\newcommand{\traceh}{\trace_h}
\newcommand{\tracehk}{\trace_h^{\ell}}
\newcommand{\Tk}{\Phi_h^\ell}
\newcommand{\bx}{{\mathbf x}}
\newcommand{\bu}{{\mathbf u}}
\newcommand{\bg}{{\mathbf g}}
\newcommand{\bA}{{\mathbf A}}
\newcommand{\bF}{{\mathbf F}}
\newcommand{\bI}{{\mathbf I}}
\newcommand{\bK}{{\mathbf K}}
\newcommand{\bJ}{{\mathbf J}}
\newcommand{\bM}{{\mathbf M}}
\newcommand{\bP}{{\mathbf P}}
\newcommand{\bR}{{\mathbf R}}
\newcommand{\bS}{{\mathbf S}}
\newcommand{\bQ}{{\mathbf Q}}
\newcommand{\bXi}{\mathbf{\Xi}}
\newcommand{\ii} {\mathtt{i}}
\newcommand{\ie} {\mathtt{e}}
\newcommand{\iv} {\mathtt{v}}
\newcommand{\iin} {\mathtt{n}}
\newcommand{\ush}{u^*_h}
\newcommand{\skel}{{\Sigma}}
\renewcommand{\lor }{\longrightarrow}
\newcommand{\Rh}{R_h}
\newcommand{\shat}{{\hat s}}
\newcommand{\sE}{\shat^E}
\newcommand{\sV}{\shat^V}
\newcommand{\etao}{\eta_{o}}
\newcommand{\ai}{A_{\ell}}
\newcommand{\bi}{B_{\ell}}
\newcommand{\Ndom}{N}
\newcommand{\NE}{N^\ell_E}
\newcommand{\omk}{{\Omega_{\ell}}}
\newcommand{\Hk}{{H_{\ell}}}
\newcommand{\Gk}{{\Gamma_{\ell}}}
\newcommand{\Gkb}{{\Gamma^{\partial}_{\ell}}}
\newcommand{\Lineari}{{\mathfrak L}_{H}}
\newcommand{\h} {\mathtt{h}}
\newcommand{\p} {\mathtt{p}}
\newcommand{\ho} {h_{\ell}} 
\newcommand{\po} {p_{\ell}}  
\newcommand{\Ho}{H_{\ell}}  
\newcommand{\faclog}{\left( 1 + \log\left(H\,p^2/ h\right) \right)}
\newcommand{\faclogi}{\left( 1 + \log\left(\frac{\Ho\,\po^2}{\ho} \right)
  \right)}
\newcommand{\gki}{{E_{\ell}^i}}
\newcommand{\Vk}{{X_h^{\ell}}}
\newcommand{\Xh}{{X_h}}
\newcommand{\Xho}{{X^{0}_h}}
\newcommand{\jump}[1]{\lbrack\!\lbrack\,#1\,\rbrack\!\rbrack} 
\newcommand{\Th}{{\mathcal T}_{h}} 
\newcommand{\Thk}{\mathcal{T}^\ell_h}
\newcommand{\Ts}{{\mathcal T}_{H}} 
\newcommand{\re}{\mathbb{R}}
\newcommand{\K}{K} 
\newcommand{\Eh}{{{\mathcal E}_h}} 
\newcommand{\calA}{{\mathcal A}} 
\newcommand{\Eho}{{{\mathcal E}^{o}_h}} 
\newcommand{\Ehb}{{{\mathcal E}^{\partial}_h}} 
\newcommand{\Ehi}{{{\mathcal E}^{\ell}_h}} 
\newcommand{\Ehbi}{{{\mathcal E}^{\partial,\ell}_h}} 
\newcommand{\Ehio}{{{\mathcal E}^{o,\ell}_h}} 
\newcommand{\vect}[1]{\bf #1}
\newcommand{\n}{{\vect{n}}} 
\newcommand{\taub}{{\vect{\tau}}} 
\newcommand{\ziz}{\zeta_0}
\newcommand{\lin}{\zeta_L}
\newcommand{\Iuno}{I_1}
\newcommand{\Idue}{I_2}
\newcommand{\aai}{{a_i}}
\newcommand{\bbi}{{b_i}}
\newcommand{\dd}[1]{\, \textrm{d#1}}
\newcommand{\av}[1]{\{#1\}} 
\renewcommand{\O}{\Omega} 
\newtheorem{remark}{Remark}[section]
\title{Substructuring Preconditioners for \\
an $h$-$p$ Nitsche-type method}
\author{P.F. Antonietti\thanks{
MOX, Dipartimento di Matematica,
Politecnico di Milano, Piazza Leonardo da Vinci 32, I-20133 Milano, Italy.
{\tt paola.antonietti@polimi.it}} ,
B. Ayuso de Dios\thanks{
Centre de Recerca Matem\'atica
Campus de Bellaterra, Edifici C
08193 Bellaterra (Barcelona), Spain. {\tt bayuso@crm.cat}} ,
S. Bertoluzza\thanks{
Istituto di Matematica Applicata e Tecnologie Informatiche del CNR, Via Ferrata, 1, 27100 Pavia, Italy. {\tt Silvia.Bertoluzza@imati.cnr.it }} ,
M. Pennacchio\thanks{
Istituto di Matematica Applicata e Tecnologie Informatiche del CNR, Via Ferrata, 1, 27100 Pavia, Italy. {\tt Micol.Pennacchio@imati.cnr.it}}
}
\date{ } 
\begin{document}
\maketitle

\begin{abstract}
We propose and study an iterative substructuring method for an $h$-$p$ Nitsche-type discretization, following the original  approach introduced in \cite{BPS} for conforming methods. We prove quasi-optimality with respect to the mesh size and the polynomial degree for the proposed preconditioner. Numerical experiments asses the performance of the preconditioner and verify the theory.
\end{abstract}

\section{Introduction}
Discontinuous Galerkin (DG) Interior Penalty (IP) methods were introduced in the late 70's for approximating elliptic problems. They were arising as a natural {\it evolution} or extension of Nitsche's method \cite{nitsche0},  and were based on the observation that inter-element continuity could be attained by penalization; in the same spirit  Dirichlet boundary conditions are weakly imposed for Nitsche's method \cite{nitsche0}.
The use, study and application of DG IP methods was abandoned for a while, probably due to the fact that they were never proven to be more advantageous or efficient than their conforming relatives. The lack of optimal and efficient solvers for the resulting linear systems, at that time, surely was also contributing to that situation. 

However, over the last 10-15 years, there has been a considerable interest in the development and understanding of DG methods for elliptic problems (see, for instance, \cite{abcm} and the references therein), partly due to the simplicity with which the DG methods handle non-matching grids and allow for the  design of hp-refinement strategies. The IP and Nitsche approaches have also found some new applications; in the design of new conforming and non-conforming methods \cite{abm0,abfm0, mika0, dominik0,guido-riviere, hansbo-elasticity} and as a way to deal with non-matching grids for domain decomposition \cite{Stenberg.2003,duarte1}.

This has also motivated the interest in developing efficient solvers for DG methods.  In particular, additive Schwarz methods are considered and analyzed  in \cite{FengKarakashian01,BrennerWang05,AntoniettiAyuso2007,AntoniettiAyuso2008,AntoniettiAyuso2009,AntoniettiHouston2011,BarkerBrennerParkSung2011,BrennerWang05}. Multigrid methods are studied in \cite{GopalakrishnanKanschat2003,BrennerZhao_2005,Kanschat2003}. 
Two-level methods and multi-level methods are presented
in~\cite{Dobrev_et_al_2006,dahmen1} and other subspace correction methods are considered in \cite{AyusoZikatanov2009,jump,AyusoB_GeorgievI_KrausJ_ZikatanovL-2009aa}. 

Still the development of preconditioners for DG methods based on Domain Decomposition (DD)  has been mostly limited to classical Schwarz methods. Research towards more sophisticated non-overlapping DD preconditioners, such as the BPS ({\it Bramble Pasciak Schatz}), Neuman-Neuman, BDDC, FETI or FETI-DP is now at its inception.  Non-overlapping DD methods typically refer to methods defined on a decomposition of a domain made up of a collection of mutually disjoint subdomains, generally called {\it substructures}. These family of methods  are obviously well suited for parallel computations and furthermore, for several problems (like problems with jump coefficients) they offer some advantages over their relative overlapping methods, and have already proved their usefulness. Roughly speaking, these methods are  algorithms  for preconditioning the Schur complement with respect to the unknowns on the skeleton of the subdomain partition.  They are generally referred {\it substructuring preconditioners}.

While the theory for the  confoming case is now well established and
understood for many problems \cite{Toselli:2004:DDM}, the
discontinuous nature of the finite element spaces at the interface of
the substructures (in the case of Nitsche-type methods) or even within
the skeleton of the domain partition, poses extra difficulties in the
analysis which preclude from having a straight extension of such
theory. Mainly, unlike in the conforming case, the coupling of the
unknowns along the interface does not allow for splitting the global
bilinear form as a sum of local bilinear forms associated to the
substructures (see for instance \cite{FengKarakashian01} and
\cite[Proposition 3.2]{AntoniettiAyuso2007}). 
Moreover the discontinuity of the finite element space makes the use
of standard $H^{1/2}$-norms in the analysis of the discrete harmonic
functions difficult.

For Nitsche-type methods, a new definition of discrete harmonic function has been introduced in \cite{Dryja.Galvis.Sarkis.2007} together with some tools (similar to those used in the analysis of mortar preconditioners) that allow them to adapt and extend the  general theory \cite{Toselli:2004:DDM} for  substructuring preconditioners in two dimensions. More precisely,  in  \cite{Dryja.Galvis.Sarkis.2007, sarkis1, sarkis2} the authors introduced and analyzed  {\it Balancing Domain with Constrains}  BDDC, Neuman-Neuman and FETI-DP domain decomposition preconditioners for a first order Nitsche type discretization of an elliptic problem with jumping coefficients. For the discretization, a symmetric IP DG scheme is used (only) on the skeleton of the subdomain partition, while piecewise linear conforming approximation is used in the interior of the subdomains. 
In these works, the authors prove quasi-optimality  with respect to the mesh-size and optimality with respect to the jump in the coefficient. They also address  the case of non-conforming meshes. 

More recently, several BDDC preconditioners have been introduced and analyzed for some full DG discretizations \cite{schoberl, luca, eun00}, following a different path. 
In  \cite{schoberl} the authors consider the $p$-version of the
preconditioner for an Hibridized IP DG method \cite{hybrid0,hybrid2},
for which the unknown is defined directly on the skeleton of the
partition. They prove cubic logarithmic growth on the polynomial
degree but also show numerically that the results are not sharp. The
IP DG and the  IP-spectral DG methods for an elliptic problem with
jumping coefficient are considered in \cite{eun00} and \cite{luca},
respectively. In both works the approach for the analysis differs
considerably from the one taken in \cite{Dryja.Galvis.Sarkis.2007, sarkis1,sarkis2} and relies on suitable space decomposition of the global DG space; using either nonconforming or conforming subspaces. This allow the authors to adapt  the classical theories  for analyzing the resulting BDDC preconditioners.

In this work, we focus on the original substructuring approach introduced in \cite{BPS} for conforming discretization of two dimensional problems and in \cite{BPSIV,Dryja.Smith.Widlund94} for three dimensions (see also  \cite{Xu.Zou,Toselli:2004:DDM} for a detailed description).  In the framework of non conforming domain decomposition methods,
this kind of preconditioner has been applied to the mortar method \cite{AMW,BertPenn,Pennacchio.08,Pennacchio.Simoncini.08} and to the three fields domain
decomposition method \cite{Bsubstr}, always considering the
$h$-version of the methods.  
For spectral discretizations and the $p$ version of conforming approximations the preconditioner has been studied in \cite{pavarino1,mandel2}. For $h$-$p$ conforming discretizations of two dimensional problems the BPS preconditioner is studied in \cite{ainsworthBPS}. To the best of our knowledge, this preconditioner has not been considered for Nitsche or DG methods before. 

Here, we propose a BPS (Bramble-Pasciak-Schatz) preconditioner for an $h$-$p$ Nitsche type discretization of elliptic problems. In our analysis, we use some of the tools introduced in \cite{Dryja.Galvis.Sarkis.2007,sarkis1}, such as their definition of the discrete harmonic lifting that allows for defining the discrete Steklov-Poincar\'e operator associated to the Nitsche-type method. However, our construction of the preconditioners is guided  by  the definition of a  suitable norm on the skeleton of the subdomain partition, that scales like an $H^{1/2}$-norm and captures the energy of the DG functions on the skeleton. This allow us  to provide a much simpler analysis,  proving quasi-optimality with respect to the mesh size and the polynomial degree for the proposed preconditioners. 
Furthermore, we demonstrate that unlike what happens in the conforming
case, to ensure quasi-optimality of the preconditioners a block
diagonal structure that de-couples completely the edge and vertex
degrees of freedom on the skeleton is not possible; 
this is due to the presence of the penalty term which is needed to deal
with the discontinuity.
We show however that the implementation of the preconditioner can be done efficiently  and that it  performs in agreement with the theory.

The rest of the paper is organized as follows. The basic notation, functional setting and the  description of the Nitsche-type method are given in next section; Section~\ref{problem}. Some technical tools required in the construction and analysis of the proposed preconditioners are revised in Section~\ref{sec:technical_tool}.
The substructuring preconditioner is introduced and analyzed in Section~\ref{sec:substr}. Its  practical implementation together with some variants of the preconditioner are discussed in Section~\ref{sec:variants}. The theory is verified through several numerical experiments presented in Section~\ref{sec:expes}. The proofs of some technical lemmas used in our analysis are reported in the Appendix \ref{app0}.

\section{Nitsche methods and Basic Notation}\label{problem}

In this section, we introduce the basic notation, the functional setting and the Nitsche discretization.

To ease the presentation we restrict ourselves to the following model problem.
Let $\Omega\subset \mathbb{R}^{2}$ a bounded polygonal domain, let $f\in L^2(\Omega)$ and let
\begin{equation*}
\left\{\begin{aligned}
-\Delta u^{\ast} &= f \qquad&& \mbox {in   } \O,\\
u^{\ast}&=0\qquad&& \mbox {on   } \partial\O.
\end{aligned}\right.
\end{equation*}
The above problem admits the following weak formulation:
\smallskip
{\it find} $u^*\in H^1_0(\Omega)$ {\it such that}:
\begin{equation}\label{prob}
  a(u^*,v)=f(v)\qquad \text{ for all } v\in H^1_0(\Omega),
\end{equation}
where
\begin{eqnarray*}
a(u,v) =  \int_\Omega \nabla u \cdot \nabla v \dd{x} \qquad  f(v) =
\int_\Omega f v \dd{x}\;, \quad \forall\, u, v\in H^1_0(\Omega)
\end{eqnarray*}
\subsection{Partitions}
We now introduce the different partitions needed in our work.
We denote by $\Ts$ a subdomain partition of $\O$  into $\Ndom$ non-overlapping shape-regular triangular or quadrilateral subdomains:
\begin{equation*}
\begin{aligned}
&\overline{\O}=\bigcup_{\ell=1}^{\Ndom}  \overline{\Omega}_\ell ,
&&\omk \cap  \O_j =\emptyset
&& \ell\ne j.
&& \end{aligned}
\end{equation*}
We set
\begin{equation}\label{def:hls}
\Hk=\min_{j\,:\,  \bar \O_\ell \cap  \bar \O_j \ne \emptyset } H_{\ell,j} \qquad \mbox{where   }\quad H_{\ell,j}=\left|\partial\omk \cap \partial \O_j\right|\;,
 \end{equation}
 and we also assume that $\Hk\simeq \mbox{diam}(\omk)$ for each $\ell=1,\ldots N$. We finally
 define the granularity of $\Ts$ by  $H=\min_{\ell} \Hk$.
We denote by $\Gamma$  and $\Gamma^\partial$ respectively the interior and the boundary portions of the skeleton of the subdomain partition $\Ts$:
\begin{equation*}
\begin{aligned}
&\Gamma=\bigcup_{\ell=1}^{\Ndom} \Gk,
&& \Gk=\partial\omk\smallsetminus \partial\O
&& &\forall\, \ell =1,\ldots, \Ndom.\\
&\Gamma^{\partial}=\bigcup_{\ell=1}^{\Ndom} \Gkb,
&&\Gkb=:\partial\omk\cap \partial\O
&& &\forall\, \ell=1\ldots, \Ndom.
\end{aligned}
\end{equation*}
We also define the complete skeleton as $\Sigma=\Gamma \cup
\Gamma^{\partial}$. The edges of the subdomain partition that form the
skeleton will be denoted by $E\subset \Gamma$ and
 we will refer to them as {\it macro edges}, if they do not allude to
 a particular subdomain or {\it subdomain edges}, when they do refer
 to a particular subdomain.

For each $\omk$, let $\left \{\Th^\ell \right \}$ be a family of   {\it fine}
partitions of $\O_\ell$ into elements (triangles or quadrilaterals) $\K$
with diameter  $h_{\K}$. All partitions $\Th^\ell$ are assumed to be shape-regular and we
define a global partition $\Th$  of $\O$ as
\begin{equation*}
\Th =\bigcup_{\ell=1}^{\Ndom} \Th^\ell.
\end{equation*}

Observe that by construction $\Th$ is a fine partition of $\Omega$ which is compatible within each subdomain $\O_\ell$ but which may be non matching across the skeleton $\Gamma$. Throughout the paper, we always assume that the following \emph{bounded local variation} property holds: for any pair of neighboring  elements
$K^{+} \in \Th^{\ell^{+}}$ and $K^{-} \in \Th^{\ell^{-}}$, $\ell^{+} \neq \ell^{-}$, $h_{K^{+}}\simeq h_{K^{-}}$. 


Note that the restriction of $\Th$ to the skeleton $\Gamma$ induces a partition of each subdomain edge $E\subset \Gamma$.
We define the set of {\it element edges} on the skeleton
$\Gamma$  and on the boundary of $\O$ as follows:
\begin{align*}
\Eho:&=\{ e = \partial \K^{+} \cap \partial \K^{-} \cap \Gamma,  \,
\K^{+}\in \Th^{\ell^{+}}\, , \K^{-}\in \Th^{\ell^{-}}\, , \ell^{+} \neq \ell^{-}\} \,,&&   \\
\Ehb:&=\{ e= \partial \K \cap \partial \Omega,   \, \K \in \Th\} \,, &&
\end{align*}
and we set $\Eh=\Eho\cup \Ehb$.
When referring to a particular subdomain, say $\omk$ for some $\ell$, the set of element edges are denoted by
\begin{equation*}
\begin{aligned}
&\Ehio = \{ e \in \Eho:\ e \subset \partial\omk\}, \quad
&&\Ehbi=\{ e \in \Ehb:\ e \subset \partial \omk\}, \quad
&& \Ehi=\Ehio\cup \Ehbi\;.
\end{aligned}
\end{equation*}

\subsection{Basic Functional setting}

For $s\geq 1$, we define the broken Sobolev space

\begin{equation*}
H^{s}(\Ts)=\left\{ \phi \in
    L^{2}(\O) \,\,: \quad \phi\big|_{\omk} \in
    H^{s}(\omk) \quad \forall\,\, \omk \in \Ts \,\right\} \sim \prod_{\ell}\,  H^s(\omk)\;,
\end{equation*}
whereas the trace space associated to $H^{1}(\Ts)$ is defined by
\begin{eqnarray*}
\trace= \prod_{\ell}\, H^{1/2}(\partial \omk).
\end{eqnarray*}
For $u = (u^\ell)_{\ell=1}^\Ndom$ in $H^1(\Ts)$ we will denote by $u_{|_\Sigma}$
the unique element $\phi =(\phi^\ell)_{\ell=1}^\Ndom$ in $\Phi$ such that
$$ \phi^\ell =u^\ell_{|_{\partial \omk}}.$$

We now recall the definition of some trace operators following \cite{abcm}, and introduce the different discrete spaces that will be used in the paper.\\

Let $e\in \Eho$ be an edge on the interior skeleton shared by two elements $\K^{+}$ and $\K^{-}$ with outward unit normal vectors $\n^{+}$ and $\n^{-}$, respectively.
For scalar and vector-valued functions $\varphi \in
H^{1}(\Ts)$ and $\taub\in \left[{H}^{1}(\Ts)\right]^2$,
we define the \emph{average} and the \emph{jump} on $e \in \Eho$ as
\begin{align*}
&\av{\taub}=\frac {1} {2}(\taub^+ +\taub^-),
&&\jump{\varphi}=\varphi^+\n^++\varphi^-\n^-\;,
&&\mbox{on } e\in \Eho
\end{align*}
On a boundary element edge $e\in \Ehb$ we set
$\av{\taub}=\taub$ and $\jump{\varphi}=\varphi \n$, $\n$ denoting the outward unit normal vector to $\O$.\\

To each element $\K\in \Thk$, we associate a polynomial approximation order $p_{K} \geq 1$, and define the $hp$-finite element space of piecewise polynomials as
\begin{equation*}
\Vk= \{
v \in C^0(\omk)\text{~such~that~}v|_{\K} \in \mathbb{P}^{p_K}(\K), ~\K\in \Thk
\},
\end{equation*}
where $\mathbb{P}^{p_{K}}(\K)$ stands for the space of polynomials of
degree at most $p_{K}$ on $\K$.
We also assume that the polynomial approximation order satisfies a
\emph{local bounded variation} property: for any pair of elements
$K^{+}$ and $K^{-}$ sharing an edge $e \in \Eho$, $p_{K^{+}} \simeq
p_{K^{-}}$. \\

Our global approximation space $\Xh$ is  then defined as
\begin{equation*}
\Xh= \{ v\in L^{2}(\O) \,\, : \text{~such~that~}v_{|_{\omk}} \in \Vk \}\sim \prod_{\ell=1}^\Ndom \Vk\;,
.
\end{equation*}
We also define $\Xho \subset \Xh$ as the subspace of functions  of $\Xh$ vanishing on the skeleton $\Sigma$, i.e.,
\begin{equation*}
\Xho=\{ v\in \Xh \,\, : \text{~such~that~}v_{|_{\Sigma}} = 0 \} .
\end{equation*}
The trace spaces associated to $\Vk$ and $\Xh$  are defined as follows
\begin{align*}
\tracehk&=\{ \eta^{\ell} \in H^{1/2}(\partial\omk) \, :\,\,
\eta^{\ell} = w_{|_{ \partial\omk }} \text{ for some } w\in \Vk\} 
&&\forall \ell=1,\ldots ,\Ndom\;\\
 \traceh & = \prod_{\ell=1}^{\Ndom} \tracehk\;
  \  \subset \trace.
\end{align*}
Notice that the functions in the above finite element spaces are
conforming in the interior of each subdomain but are double-valued on $\Gamma$.
Moreover, any function $v\in \Xh$ can be represented as $v = ( v^{\ell})_{\ell=1}^\Ndom$
with $v^{\ell}\in \Vk$.\\

Next, for each subdomain $\omk\in \Ts$ and for each subdomain edge $E \subset \partial\omk$, we define the discrete trace spaces
 \begin{align*}
&\Phi_{\ell}(E) = {\tracehk}_{|_E}, 
&&\Phi^{o}_{\ell}(E)  = \{ \eta^{\ell}\in \Phi_{\ell}(E) \,\, :\; ~~ \eta^{\ell}=0  \;\mbox{on  }
 \partial E\, \}.
  \end{align*}
Note that, since we are in two dimensions, the boundary of a subdomain edge $E$ is the set of the two endpoints (or vertices) of $E$, that is if $E=(a,b)$ then $\partial E=\{a,b\}$.\\

Finally, we introduce a suitable coarse space $\Lineari \subset \trace$, that will be required  for the definition of the subtructuring
preconditioner:
\begin{equation}\label{eq:Lineari}
\Lineari = \{  \eta =(\eta^\ell) \in \trace \,\, : \,\,\,  \eta^\ell_{|_{E}} \in \mathbb{P}^{1}(E), \quad \forall\, E\subset \partial\omk\;, \,\,\, \forall \omk \in \Ts  \}\;.
\end{equation}

                                %

\subsection{Nitsche-type methods}
In this section, we introduce the Nitsche-type method we consider for approximating the model problem \eqref{prob}. Here and in the following, to avoid the proliferation of constants, we will use the notation $x \lesssim y$ to represent the inequality $x \leq C y$, with $C > 0$ independent of the mesh size, of the polynomial approximation order, and of the size and number of subdomains. Writing $x \simeq y$ will signify that there exists a constant $C > 0$ such that $C^{-1}x \leq y \leq Cx$.\\

We introduce the local mesh size function $\h \in L^{\infty}(\Sigma)$ defined as
\begin{equation}\label{def:h}
\h(x)=\left\{
\begin{aligned}
&h_{K} \quad
&& \textrm{if $x\in\partial\K \cap \partial\Omega$},\\
&\min\{ h_{K^{+}},  h_{K^{-}}\}
&& \textrm{if $x\in\partial\K^{+} \cap \partial\K^{-}\cap\Gamma$,
$\K^{\pm}\in \Th^{\ell^{\pm}}\, , \ell^{+} \neq \ell^{-}$},
\end{aligned}\right.
\end{equation}
and the local  polynomial degree function $\p \in L^{\infty}(\Sigma)$:
 \begin{equation}\label{def:p2}
\p(x)=\left\{\begin{aligned}
&p_{K} \quad
&& \textrm{if $x\in\partial\K \cap \partial\Omega$},\\
&\max\{ p_{K^{+}},  p_{K^{-}}\}
&& \textrm{if $x\in\partial\K^{+} \cap \partial\K^{-}\cap \Gamma$,
$\K^{\pm}\in \Th^{\ell^{\pm}}$, $\ell^{+} \neq \ell^{-}$}.
\end{aligned}\right.
\end{equation}

\begin{remark}\label{las:h}
A different definition for the local mesh size function $\h$ and the local polynomial degree function $\p$  involving harmonic averages is sometimes used for the definition of Nitsche or DG methods \cite{Dryja.Galvis.Sarkis.2007}.
 We point out that such a definition yields to functions $\h$ and $\p$ which are of the same order as the ones given in \nref{def:h} and \nref{def:p2}, and therefore result in an equivalent method.
\end{remark}

We now define the following Nitsche-type discretization
\cite{Stenberg.1998,Stenberg.2003} to approximate problem \eqref{prob}:
find $\ush\in \Xh$ such that
\begin{equation}\label{discrete_problem}
  \calA_h(\ush,v_h)=f(v_h)\qquad \text{ for all } v_h\in \Xh\;,
\end{equation}
where, for all $u,v \in  \Xh$, $\calA_h(\cdot,\cdot)$ is defined as
\begin{equation}\label{def:nit}
\begin{aligned}
\calA_h(u,v)&=
\sum_{\ell=1}^{\Ndom}\int_{\omk} \nabla u \cdot \nabla v  \dd{x}
-\sum_{e \in \Eh} \int_e \av{\nabla u}\cdot \jump{v}\dd{s} &&\\
&\quad-\sum_{e \in \Eh} \int_e \jump{u}\cdot
\av{\nabla v}   \dd{s}
+\sum_{e \in \Eh} \alpha \,  \int_e \p^2  \, \h^{-1}\jump{u}\cdot\jump{v} \dd{s}. &&
\end{aligned}
\end{equation}

Here, $\alpha>0$ is the penalty parameter that needs to be
chosen $\alpha\geq \alpha_0$ for some $\alpha_0\gtrsim  1$ large
enough to ensure the coercivity of $\calA_h(\cdot,\cdot)$. \\

On $\Xh$, we introduce the following semi-norms:
\begin{equation}\label{eq:seminorms}
\begin{aligned}
|v|_{1,\Ts}^{2}=\sum_{\ell=1}^{\Ndom}\|\nabla v\|^{2}_{L^{2}(\omk)} ,
&&\qquad
 \snorm{v}_{\ast,\Eh}^{2} &=  \sum_{e\in \Eh} \|\p \, \h^{-1/2}\, \jump{v}\|_{L^{2}(e)}^{2} ,
\end{aligned}
\end{equation}
together with the natural induced norm by $\mathcal{A}_{h}(\cdot,\cdot)$:
\begin{equation}\label{def:normA}
\begin{aligned}
&\normAh{v}^2=|v|_{1,\Ts}^{2} + \alpha \snorm{v}_{\ast,\Eh}^{2}
&& \forall\, v\in \Xh\;.
\end{aligned}
\end{equation}
Following \cite{Stenberg.1998} (see also  \cite{abcm}) it is easy to
see the bilinear form $\calA_h(\cdot,\cdot)$ is continuous and
coercive (provided $\alpha\geq \alpha_0$ ) with respect the norm
\nref{def:normA}, i.e.,

\begin{equation*}
\begin{aligned}
&\textrm{Continuitiy}:
&&|\calA_h(u,v)| \lesssim \normAh{u}\normAh{v}
&&\forall\, u,v\in \Xh\\
&\textrm{Coercivity}:
&&\calA_h(v,v) \gtrsim \normAh{v}^{2}
&& \forall\, v\in \Xh.
\end{aligned}
\end{equation*}
From now on we will always assume that $\alpha \geq \alpha_0$.
Notice that the  continuity and coercivity constants depend only on
the shape regularity of $\Th$. 


\section{Some technical tools}\label{sec:technical_tool}
We now revise some technical tools that will be required in the construction and analysis of the proposed preconditioners.\\

We recall the local {\em inverse inequalities} (cf. \cite{Schwab98}, for example):  for any $\eta \in \mathbb{P}^{p_{\K}}(K)$ it holds
\begin{equation*}
   | \eta |_{H^{r}(e)} \lesssim  p_{\K}^{2(r-s)}h_{\K}^{s - r} | \eta
 |_{H^{s}(e)}, \qquad e\subset \partial \K
 \end{equation*}
 for all $s,r$ with $0 \leq s < r \leq 1$.  Using the above inequality
 for $s = 0$  and space interpolation, it is easy to deduce that for a subdomain edge $E\subset \partial \Omega_\ell$ and for all $s,r$,  $0 \leq s < r < 1$, for all $\eta \in X_h^\ell|_E$ it holds that
 \begin{align}
    | \eta |_{H^{r}(E)}  &\lesssim
\max_{\substack{K \in \Th^\ell\\\partial K \cap E \neq \emptyset}}
(p_K^2 h_K^{-1})^{(r-s)}  | \eta |_{H^{s}(e)}
\lesssim \po^{2(s-r)} \ho^{s - r} | \eta|_{H^{s}(E)}, \label{inv:E} \\[4mm]
| \eta |_{H^{r}(\partial\omk)}
& \lesssim \max_{\substack{K \in \Th^\ell\\\partial K \cap \partial\omk \neq \emptyset}}  (p_K^2 h_K^{-1})^{(r-s)}  | \eta |_{H^{s}(E)} \lesssim \po^{2(r-s)} \ho^{s - r} | \eta |_{s,\partial\omk},  \label{inv:O}
\end{align}
where $\ho$  and $\po$  refer  to the minimum (resp. the maximum) of the restriction to $\partial\omk$ of the
local mesh size $\h$ (resp. the local polynomial degree function $\p$),
that is,
\begin{equation}\label{def:hpl}
\ho=\min_{x\in \partial\omk \cap \Gamma} \h(x)
\quad \text{and} \quad
\po=\max_{x\in \partial\omk \cap \Gamma} \p(x) .
\end{equation}

We write conventionally
\begin{equation}\label{quot:convention}
\frac{H\,p^2}{h} = \max_\ell \left \{ \frac{H_\ell\, \po^2}{\ho}\right \} .
\end{equation}


The next two results  generalize  \cite[Lemma~3.2, 3.4 and
3.5]{BPS} and \cite[Lemma~3.2]{Bsubstr} to the $hp$-version.
The detailed proofs are reported in the Appendix \ref{app0}.
\begin{lemma} \label{lembsp2}
Let $\eta=(\eta^\ell)_{\ell=1}^N \in  \traceh$ and let $ \chi = (\chi^\ell)_{\ell=1}^N \in \Lineari$ be such that
$\chi^\ell(a)= \eta^\ell(a)$ at all vertices $a$ of $\omk$, for all $\omk\in \Ts$. Then
\begin{equation*}
\sum_{\omk\in \Ts}|\chi^\ell |_{H^{1/2}(\partial \omk)}^2 \lesssim
\left(1+\log{\left(\frac{H\,p^{2}}{h}\right)}\right)
 \sum_{\omk\in \Ts} |\eta^\ell |^2_{H^{1/2}(\partial
  \omk)}\;.
\end{equation*}
 \end{lemma}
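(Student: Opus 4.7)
The plan is to reduce everything to a single subdomain $\omk \in \Ts$ and then express $\chi^\ell$ in a basis of macro-hat functions associated with the vertices of $\omk$. For each vertex $a$ of $\omk$, let $\phi_a^\ell$ be the element of $\Lineari_{|_{\partial\omk}}$ that is linear on each subdomain edge $E\subset\partial\omk$, equal to $1$ at $a$ and to $0$ at the other vertices of $\omk$. By hypothesis on $\chi^\ell$ one has
\begin{equation*}
\chi^\ell = \sum_{a \text{ vertex of } \omk} \eta^\ell(a)\, \phi_a^\ell,
\end{equation*}
and since $\sum_a \phi_a^\ell \equiv 1$ on $\partial\omk$, for any constant $c\in\RR$
\begin{equation*}
\chi^\ell - c = \sum_a (\eta^\ell(a) - c)\,\phi_a^\ell.
\end{equation*}
As the $H^{1/2}$ seminorm is invariant under addition of constants, the triangle inequality yields
\begin{equation*}
|\chi^\ell|_{H^{1/2}(\partial\omk)}^2 \lesssim \sum_a |\eta^\ell(a) - c|^2\, |\phi_a^\ell|_{H^{1/2}(\partial\omk)}^2.
\end{equation*}

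Next, I would bound the two factors on the right-hand side separately. For $|\phi_a^\ell|_{H^{1/2}(\partial\omk)}^2$, a change of variables mapping $\omk$ to a reference shape-regular polygon of unit diameter together with the scaling of the $H^{1/2}$ seminorm shows $|\phi_a^\ell|_{H^{1/2}(\partial\omk)}^2 \lesssim 1$, uniformly in $H_\ell$ and in the number of subdomain edges meeting at $a$ (which is bounded by shape regularity of $\Ts$). For the vertex-value factor, I would pick $c = \bar\eta^\ell := |\partial\omk|^{-1} \int_{\partial\omk} \eta^\ell\dd{s}$ and appeal to the $hp$-version of the discrete Sobolev (or ``logarithmic trace'') inequality on the closed curve $\partial\omk$:
\begin{equation*}
|\eta^\ell(a) - \bar\eta^\ell|^2 \,\lesssim\, \faclogi\, |\eta^\ell|_{H^{1/2}(\partial\omk)}^2, \qquad \forall\, a \text{ vertex of } \omk.
\end{equation*}

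I expect this last inequality to be the main obstacle and the technical heart of the proof. In the $h$-conforming setting this is the classical BPS estimate (\cite{BPS}, Lemma~3.2), proved via a dyadic decomposition of $\partial\omk$ combined with the $L^\infty$ $\leftrightarrow$ $H^{1/2}$ embedding on each scale. In the present $hp$-Nitsche setting $\eta^\ell$ is only continuous within each subdomain and is piecewise polynomial of degree $\le \p$ on a mesh of local size $\h$ along $\partial\omk$, so the ``effective'' number of degrees of freedom is $H_\ell \po^2/\ho$ rather than $H_\ell/\ho$. The inverse inequalities \eqref{inv:E}--\eqref{inv:O} allow one to absorb the polynomial degree into the mesh size and to carry the classical dyadic/BPS argument through essentially verbatim, replacing $H/h$ with $H_\ell\po^2/\ho$ in the logarithmic factor.

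Finally, summing the resulting bound over the (shape-regular, hence bounded) number of vertices of $\omk$ and then over all subdomains $\omk\in\Ts$, and bounding each local factor $\faclogi$ by the global $\faclog$ through the convention \eqref{quot:convention}, gives the stated estimate. The detailed verification of the logarithmic trace inequality in the $hp$-setting is naturally deferred to the appendix, as announced in the paper.
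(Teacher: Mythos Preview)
Your argument is correct and close in spirit to the paper's, but the reduction step differs. The paper bounds $|\chi^\ell|_{H^{1/2}(\partial\omk)}^2$ directly by $\sum_i (\eta^\ell(a_i)-\eta^\ell(b_i))^2$, where $a_i,b_i$ are the endpoints of the $i$-th subdomain edge $E^i$ of $\omk$ (a direct computation using the linearity of $\chi^\ell$), and then applies the per-edge estimate $(\eta(a)-\eta(b))^2\lesssim\faclogi|\eta|^2_{H^{1/2}(E)}$ from Lemma~\ref{lemmainf}(i). Your route instead introduces the mean $\bar\eta^\ell$ and appeals to a global discrete Sobolev inequality on the closed curve $\partial\omk$. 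Both approaches ultimately rest on the same $hp$ logarithmic ingredient (Lemma~\ref{lemmainf}); the paper's variant is slightly more streamlined because working with endpoint differences on each edge avoids the auxiliary constant $c$ and keeps the estimate edge-local, while yours is a bit more conceptual in separating the geometric factor $|\phi_a^\ell|_{H^{1/2}(\partial\omk)}$ from the analytical vertex-value estimate.
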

\begin{lemma}\label{lembsp}
  Let $\xi \in \Tk$ such that $\xi(a) = 0$ at all  vertices
  $a$ of $\omk$. Let $\zeta_L\in H^{1/2}(\partial \omk)$ be linear on each subdomain edge of
$\partial \omk$. Then, it holds
  \begin{equation*}
    \sum_{E\subset \partial\omk} \| \xi \|_{H^{1/2}_{00}(E)}^2\lesssim
\left(1+\log{\left(\frac{\Ho\,\po^{2}}{\ho}\right)}\right)^2
\left | \xi + \zeta_L \right
    |_{H^{1/2}(\partial\omk)}^2 \; ,
      \end{equation*}
     where $\ho$ and $\po$ are defined in \eqref{def:hpl} and $\Ho$ is defined as in \eqref{def:hls}.
    \end{lemma}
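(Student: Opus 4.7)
The plan is to prove the bound edge-by-edge and sum over the bounded number of macro edges of $\omk$. Setting $\phi = \xi + \lin$, I first note that $\phi \in \Tk$: the requirement $\lin\in H^{1/2}(\partial\omk)$ forbids any jump at the vertices of $\omk$ (a jump would make the $H^{1/2}$ seminorm infinite), so $\lin$ is continuous on $\partial\omk$; since on the fine trace mesh $\lin$ is piecewise linear and $\xi\in\Tk$ is piecewise polynomial of the prescribed local degrees, $\phi\in \Tk$. Fix a macro edge $E\subset\partial\omk$ with endpoints $a,b$ and denote by $\lin^E$ the linear function on $E$ taking the values $\phi(a)$ and $\phi(b)$; since $\xi$ vanishes at $a$ and $b$, one has $\xi|_E = \phi|_E - \lin^E$.

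I would then use the standard equivalent expression
\[
\|\xi\|_{H^{1/2}_{00}(E)}^2 \simeq |\xi|_{H^{1/2}(E)}^2 + \int_E \frac{\xi(x)^2}{d(x,\partial E)}\,dx,
\]
which is finite because $\xi$ vanishes at $\partial E$. The seminorm piece is controlled by the triangle inequality together with the explicit computation $|\lin^E|_{H^{1/2}(E)}^2 \lesssim (\phi(b)-\phi(a))^2$. The weighted $L^2$ piece is treated with the $hp$ Hardy-type estimate
\[
\int_E \frac{\xi(x)^2}{d(x,\partial E)}\,dx \lesssim \faclogi \|\xi\|_{L^\infty(E)}^2,
\]
valid for piecewise polynomials on an $hp$-mesh vanishing at $\partial E$. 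To bound $\|\xi\|_{L^\infty(E)}$ I exploit that $\lin^E$ is linear with vertex values equal to those of $\phi$, so for any constant $c$ one has $\|\lin^E-c\|_{L^\infty(E)} \le \|\phi-c\|_{L^\infty(\partial\omk)}$ and hence $\|\xi\|_{L^\infty(E)} \le 2\|\phi-c\|_{L^\infty(\partial\omk)}$; choosing $c$ equal to the mean of $\phi$ on $\partial\omk$, the $hp$ discrete Sobolev inequality on the closed curve $\partial\omk$,
\[
\|\phi - c\|_{L^\infty(\partial\omk)} \lesssim \faclogi^{\,1/2}\, |\phi|_{H^{1/2}(\partial\omk)},
\]
supplies the needed control (and in particular bounds $|\phi(b)-\phi(a)|$ by the oscillation of $\phi$). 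Collecting the estimates yields $\|\xi\|_{H^{1/2}_{00}(E)}^2 \lesssim \faclogi^{\,2}\,|\phi|_{H^{1/2}(\partial\omk)}^2$, and summing over the constantly many macro edges of $\omk$ gives the claim.

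The main obstacle is supplying the two underlying $hp$ ingredients: the logarithmic Sobolev inequality on $\partial\omk$ and the weighted Hardy-type estimate on $E$. Both rely on the local inverse inequalities \eqref{inv:E}--\eqref{inv:O} together with a careful near-endpoint analysis, where the effective minimum node spacing scales like $h_K/p_K^2$ and is responsible for replacing the classical $\log(\Ho/\ho)$ factor of the pure $h$-case by $\log(\Ho\po^2/\ho)$. I would establish (or cite) these two $hp$ tools in the Appendix, in the same spirit as the other technical lemmas of the paper.
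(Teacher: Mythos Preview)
Your proposal is correct and takes a genuinely different route from the paper. The paper introduces an auxiliary function $\zeta_0\in\Tk$ vanishing at the vertices and defined by an $H^{1/2}(\partial\omk)$--Galerkin projection of $\zeta_L$; it then splits $\xi=(\xi+\zeta_0)-\zeta_0$, applies Lemma~\ref{lemmainf}{\it (iii)} to $\xi+\zeta_0$ (using orthogonality to pass from $\zeta_0$ back to $\zeta_L$), and handles $\|\zeta_0\|_{H^{1/2}_{00}(E)}$ by a separate near--endpoint analysis of the Hardy weight, working with $\zeta_\perp=\zeta_0-\zeta_L$. Your argument bypasses $\zeta_0$ entirely: you work directly with $\phi=\xi+\zeta_L\in\Tk$ (which is legitimate since $\mathfrak L_H\subset\Phi_h$), decompose $\|\xi\|_{H^{1/2}_{00}(E)}^2$ into the $H^{1/2}(E)$ seminorm plus the Hardy integral, and control each piece by combining an $hp$ Hardy estimate with the $hp$ discrete Sobolev inequality for $\phi-\bar\phi$ on the closed curve $\partial\omk$. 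The upshot is a shorter, more transparent proof that cleanly attributes one logarithmic factor to each of the two ingredients; the paper's version is more modular in that it reuses Lemma~\ref{lemmainf}{\it (iii)}, but it still ends up performing essentially the same Hardy analysis on the auxiliary $\zeta_0$. Your two ``$hp$ tools'' are indeed straightforward to prove with the inverse inequalities \eqref{inv:E}--\eqref{inv:O}: the Sobolev bound on $\partial\omk$ follows by the same argument the paper gives for \eqref{eq:27}--\eqref{eq:21} with $E$ replaced by $\partial\omk$, and the Hardy bound follows from Markov's inequality on the endpoint element (splitting the integral at $h_K/p_K^2$) combined with the trivial $L^\infty$ bound away from it.
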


\subsection{Norms on $\traceh$}
We now introduce a suitable norm on $\traceh$  that will suggest how to properly
construct the preconditioner.
The natural norm that we can define
for all $\eta=(\eta_\ell)_\ell\in \traceh$ is:
\begin{equation}\label{Snormh}
  \Shnorm{\eta}  =
  \inf_{
  \begin{array}{c}
       u\in  \Xh \\
      {u}_{|\Sigma}=\eta
  \end{array}} \normAh{u}\;,
\end{equation}
where the $\inf$ is taken over all $u\in \Xh$ that coincide with
$\eta$ along $\Sigma$. We recall that since on $\Gamma$ both $u$ and $\eta$ are double valued, the identity $\eta = u_{|_\Sigma}$ is to be intended as $\eta^\ell = u^\ell_{|_{\partial\omk}}$.
Although \eqref{Snormh} is the natural trace norm induced on $\Phi$ by the norm (\ref {def:normA}),  working with it might be difficult.

For this reason, we introduce another norm which will be easier to deal with and which, as we
will show below, is equivalent to \eqref{Snormh}. The structure of the
preconditioner proposed in this paper will be driven by this norm.
We define:\label{normDG}
\begin{equation}
\begin{aligned}
  \ShnormDG{\eta}^2 &= \sum_{\omk\in \Ts} |\eta |^2_{H^{1/2}(\partial
    \omk)} +  \alpha
\sum_{e\in \Eh}\| \p \,\h^{-1/2} \jump{\eta}\|_{L^{2}(e)}^{2}.
\end{aligned}
\end{equation}
The next result shows that the norms \eqref{Snormh} and \eqref{normDG} are indeed equivalent:
\begin{lemma}\label{equivT}
 The following norm equivalence holds:
\begin{equation*}
\begin{aligned}
 \Shnorm{\eta}  \lesssim \ShnormDG{\eta} \lesssim \Shnorm{\eta}
&& \forall \eta\in \traceh
\end{aligned}
\end{equation*}
\end{lemma}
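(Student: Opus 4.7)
The plan is to prove the two inequalities separately, handling the ``easy'' direction $\ShnormDG{\eta} \lesssim \Shnorm{\eta}$ first, and then the ``hard'' direction $\Shnorm{\eta} \lesssim \ShnormDG{\eta}$ by constructing an explicit lifting.

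\textbf{Lower bound on $\Shnorm{\eta}$.} Fix any $u \in \Xh$ with $u_{|\Sigma} = \eta$ (understood componentwise, $\eta^\ell = u^\ell_{|\partial\omk}$). Since jumps depend only on boundary values, one has $\jump{u}_e = \jump{\eta}_e$ for every $e \in \Eh$, so the penalty contribution to $\ShnormDG{\eta}^2$ equals $\alpha |u|_{\ast,\Eh}^2$ exactly. For the $H^{1/2}$ contribution, I would apply the classical continuous trace inequality on each subdomain $\omk$, giving
\begin{equation*}
|\eta^\ell|_{H^{1/2}(\partial\omk)}^2 \;=\; |u^\ell_{|\partial\omk}|_{H^{1/2}(\partial\omk)}^2 \;\lesssim\; |u^\ell|_{H^1(\omk)}^2,
\end{equation*}
with a constant depending only on the shape regularity of $\omk$. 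Summing over $\ell$ and adding the penalty term yields $\ShnormDG{\eta}^2 \lesssim \normAh{u}^2$. Taking the infimum over all admissible $u$ gives $\ShnormDG{\eta} \lesssim \Shnorm{\eta}$.

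\textbf{Upper bound on $\Shnorm{\eta}$.} Here I need to exhibit a particular $u \in \Xh$ with $u_{|\Sigma}=\eta$ such that $\normAh{u} \lesssim \ShnormDG{\eta}$. The natural choice is to take $u^\ell \in \Vk$, subdomain by subdomain, to be the discrete harmonic extension of $\eta^\ell$, i.e.\ the unique minimizer of $|\cdot|_{H^1(\omk)}$ in $\Vk$ subject to the Dirichlet datum $\eta^\ell$ on $\partial\omk$. Since $\eta^\ell \in \tracehk$, such an extension exists in $\Vk$, and by construction $u_{|\Sigma}=\eta$, so the jump/penalty part of $\normAh{u}^2$ coincides with the penalty part of $\ShnormDG{\eta}^2$.

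The crux is to bound $|u^\ell|_{H^1(\omk)}$ by $|\eta^\ell|_{H^{1/2}(\partial\omk)}$ with a constant independent of $\ho,\po,\Ho$. I would invoke the $hp$-stable polynomial extension result (Babu{\v s}ka--Suri and its refinements; see e.g.\ the standard $hp$-FEM references on polynomial liftings for shape-regular elements): there exists a lifting operator from $\tracehk$ into $\Vk$ that is continuous from $H^{1/2}(\partial\omk)$ to $H^1(\omk)$, with continuity constant independent of both the mesh size and the polynomial degree. The discrete harmonic extension, being the minimum energy extension among all such liftings, then satisfies
\begin{equation*}
|u^\ell|_{H^1(\omk)}^2 \;\lesssim\; |\eta^\ell|_{H^{1/2}(\partial\omk)}^2.
\end{equation*}
Summing over $\ell$ and combining with the jump term gives $\normAh{u}^2 \lesssim \ShnormDG{\eta}^2$, whence $\Shnorm{\eta} \le \normAh{u} \lesssim \ShnormDG{\eta}$.

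\textbf{Main obstacle.} The nontrivial ingredient is the $hp$-stable polynomial lifting on a shape-regular subdomain with constant independent of $h$ and $p$; this is a known but deep result that I would simply cite. Everything else reduces to standard trace inequalities and the observation that jumps across macro edges depend only on the skeleton values $\eta$, which is why the penalty part matches exactly on both sides of the equivalence.
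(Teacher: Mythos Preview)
Your proposal is correct and follows essentially the same route as the paper's proof: trace inequality for the direction $\ShnormDG{\eta}\lesssim\Shnorm{\eta}$, and the subdomain-wise discrete harmonic extension for the reverse inequality, with the penalty term matching exactly on both sides since jumps depend only on skeleton values. The only (minor) difference is that the paper simply cites \cite{BPS} for the bound $|\check u^\ell|_{H^1(\omk)}\lesssim|\eta^\ell|_{H^{1/2}(\partial\omk)}$, whereas you are more explicit in pointing out that in the present $hp$ setting this rests on an $hp$-stable polynomial lifting result; your care here is well placed, but the overall argument is identical.
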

\begin{proof}
We first prove that $\ShnormDG{\eta} \lesssim \Shnorm{\eta} $. Let $\eta=(\eta_{\ell})_{\ell=1,\dots,\Ndom}\in \traceh$ and let
  $u=(u_{\ell})_{\ell=1,\dots,\Ndom}$ such that
  ${u}_{|\skel}=\eta$ arbitrary. Thanks to the trace inequality, 
  we have
  $$
  |\eta_{\ell}|^2_{H^{1/2}(\partial \omk)} \lesssim
 |u_{\ell}|^2_{H^1(\omk)}\;,
   $$
   and so, summing over all
the subdomains $\omk \in \Ts$ we have
 $$
 \sum_{\omk\in \Ts} |\eta_{\ell}|^2_{H^{1/2}(\partial \omk)} \lesssim
 \sum_{\omk\in \Ts} |u_{\ell}|^2_{H^1(\omk)} =|u|_{1,\Ts}^{2}\;.
   $$
Adding now the term $\alpha \sum_{e\in \Eh}\| \p \h^{-1/2} \jump{\eta}\|_{L^2(e)}^{2}$
to both sides, and recalling the definition of the norms \eqref{def:normA}, \eqref{Snormh} and \eqref{normDG} we get the thesis thanks to the arbitrariness of $u$.

We now prove that
  $ \Shnorm{\eta}\lesssim \ShnormDG{\eta} $. Given
  $\eta=(\eta_{\ell})_{\ell=1,\dots,L}\in \traceh$
  let  $\check u_\ell \in X_h^\ell$ be the standard discrete harmonic lifting of $\eta^\ell$, for which the bound
$\snorm{\check u_\ell}_{H^1(\omk)} \lesssim
\snorm{\eta_\ell}_{H^{1/2}(\partial \omk)}$ holds
 (see e.g. \cite{BPS}) and let
$\check u = (\check u^\ell)_{\ell=1,\dots,L}$
Summing over all the subdomains $\omk$ and adding the term
$ \alpha  \sum_{e\in \Eh}\| \p \h^{-1/2} \jump{\eta}\|_{L^2(e)}^{2}$ we get
 \begin{equation*}\label{disatau}
\Shnorm{\eta} \leq \normAh{\check u}\lesssim \ShnormDG{\eta}.
\end{equation*}
\end{proof}

\section{Substructuring preconditioners} \label{sec:substr}
In this section we present the construction and analysis of a substructing preconditioner for the Nitsche method \eqref{discrete_problem}-\eqref{def:nit}.	

The first step in the construction is to split the set of degrees of
freedom into {\em interior}
degrees of freedom
 (corresponding to basis functions identically vanishing on
the skeleton) and degrees of freedom associated to the skeleton
$\Gamma$ of the subdomain
partition.
Then,
the idea  of the ``substructuring'' approach (see \cite{BPS}) consists in further distinguishing  two types among the degrees of freedom associated to $\Gamma$ :  {\em edge} degrees
of freedom and {\em vertex} degrees of freedom. Therefore,  any function $u \in \Xh$ can be split as the sum of three suitably defined components: $u = u^{0}+u_{\Gamma}=u^0 + u^E + u^V$.

We first show how to {\it eliminate} (or condensate) the interior degrees of freedom and introduce the discrete Steklov-Poincar\'e operator associated to \eqref{def:nit}, acting on functions living on the  skeleton of the subdomain partition. We then propose a preconditioner of substructuring type for the discrete Steklov-Poincar\'e operator and provide the convergence analysis.

\subsection{Discrete Steklov-Poincar\`e operator}
 Following  \cite{Dryja.Galvis.Sarkis.2007, sarkis1},
 we now introduce a discrete harmonic lifting that allows for defining
 the discrete Steklov-Poincar\'e operator associated to
 \eqref{def:nit}.  We also show that such 
 a
 discrete
 Steklov-Poincar\'e operator defines a norm 
that is
equivalent
 to the one defined in \eqref{normDG}.\\

Let $\Xho \subset \Xh$ be the subspace of
functions vanishing on the skeleton of the decomposition. Given any
discrete function  $w \in \Xh$, we can split it as the sum of an {\em interior} function $w^0 \in \Xho$
and a suitable discrete lifting 
of its trace.
More precisely, following \cite{Dryja.Galvis.Sarkis.2007, sarkis1}, we split
$$
  w = w^0 + \Rh(w_{|\Sigma}), \qquad  w^0 \in \Xho,
$$
where, for $\eta \in \Phi_h$, $\Rh(\eta) \in \Xh$ denotes  the
unique element of $\Xh$ satisfying
\begin{equation}\label{def:Rh}
\begin{aligned}
&\Rh(\eta)_{|_\Sigma} = \eta, \quad
&&\calA_h(\Rh(\eta),v_h) = 0
&&\forall v_h \in \Xho.
    \end{aligned}
\end{equation}
The following proposition is easy to prove
 (see \cite{Dryja.Galvis.Sarkis.2007, sarkis1}).
\begin{proposition}
For $\eta=(\eta^\ell) \in \Phi_h$, the following identity holds:
\begin{equation*}
R_h(\eta)_{|_{\omk}} = w_\ell^H +  w_\ell^0,
\end{equation*}
with $w_\ell^H \in X_h^\ell$ denoting the standard discrete harmonic lifting of $\eta^\ell$
\begin{equation*}
\begin{aligned}
&{w_\ell^H} = \eta^\ell \text{ on } \partial\omk, \quad
&&\int_\omk \nabla w_\ell^H \cdot \nabla v_h^\ell = 0
&&\forall v_h^\ell \in X_h^\ell \cap H^1_0(\omk),
\end{aligned}
\end{equation*}
and $w_\ell^0 \in X_h^\ell \cap H^1_0(\omk)$ being the solution of
\begin{equation*}
\begin{aligned}
&\int_{\omk} \nabla w_\ell^0 \cdot \nabla v_h^\ell = \int_{\partial\omk} \jump{\eta} \cdot \nabla v_h^\ell,
&& \forall v_h^\ell \in X_h^\ell \cap H^1_0(\omk).
\end{aligned}
\end{equation*}
\end{proposition}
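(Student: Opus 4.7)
The plan is a straightforward unfolding of definitions. I would define $w_\ell^H \in X_h^\ell$ to be the standard discrete harmonic lifting of $\eta^\ell$, and then simply set $w_\ell^0 := \Rh(\eta)_{|_{\omk}} - w_\ell^H$; the decomposition $\Rh(\eta)_{|_{\omk}} = w_\ell^H + w_\ell^0$ then holds by construction. The proof thus reduces to two verifications: (i) $w_\ell^0 \in X_h^\ell \cap H^1_0(\omk)$, and (ii) $w_\ell^0$ solves the stated variational problem.

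Point (i) is immediate: on $\partial\omk$ both $\Rh(\eta)_{|_{\omk}}$ and $w_\ell^H$ equal $\eta^\ell$---the former because $\Rh(\eta)_{|_\Sigma} = \eta$ forces $\Rh(\eta)_{|_{\partial\omk}} = \eta^\ell$, the latter by definition of the harmonic lifting---so their difference vanishes on $\partial\omk$.

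For (ii), I would pick any $v_h^\ell \in X_h^\ell \cap H^1_0(\omk)$, extend it by zero to the other subdomains to obtain $v_h \in \Xho$, and test the defining condition $\calA_h(\Rh(\eta), v_h) = 0$ with this $v_h$. Since $v_h$ vanishes on $\Sigma$, one has $\jump{v_h} \equiv 0$ on every edge $e \in \Eh$, so both the symmetry term and the penalty term in $\calA_h$ drop. What is left is
\begin{equation*}
\int_{\omk} \nabla \Rh(\eta) \cdot \nabla v_h^\ell \dd{x} \;=\; \sum_{\substack{e \in \Eh \\ e \subset \partial\omk}} \int_e \jump{\Rh(\eta)} \cdot \av{\nabla v_h} \dd{s}.
\end{equation*}
Splitting $\Rh(\eta)_{|_{\omk}} = w_\ell^H + w_\ell^0$ on the left and using $\int_{\omk}\nabla w_\ell^H \cdot \nabla v_h^\ell \dd{x} = 0$ (the defining property of the harmonic lifting) eliminates the $w_\ell^H$ contribution; on the right, $\Rh(\eta)_{|_\Sigma} = \eta$ gives $\jump{\Rh(\eta)} = \jump{\eta}$. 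Collecting the remaining edge integrals into the compact form $\int_{\partial\omk} \jump{\eta} \cdot \nabla v_h^\ell$ will yield the identity claimed.

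The only mildly delicate point, and essentially the sole place where care is required, will be the bookkeeping of normal orientations and of the one-sidedness of $\av{\nabla v_h}$ on edges $e \subset \partial\omk$ shared with a neighboring subdomain (on which $v_h$ has been extended by zero): these have to be reconciled with the compact notation used in the statement. Modulo this cosmetic matter, the proof is just an unfolding of the definitions of $\calA_h$, $\Rh$, and the standard discrete harmonic lifting.
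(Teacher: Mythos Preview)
Your proof is correct and is exactly the natural direct verification. The paper does not actually supply a proof of this proposition: it only says that the result ``is easy to prove'' and refers the reader to \cite{Dryja.Galvis.Sarkis.2007,sarkis1}. Your caution about the factor $\tfrac12$ arising from $\av{\nabla v_h}$ on interior edges (where $v_h$ vanishes on the neighboring subdomain) is well placed; the compact expression $\int_{\partial\omk}\jump{\eta}\cdot\nabla v_h^\ell$ in the statement should indeed be read modulo this convention, and you are right to flag it as the only point requiring attention.
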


The space $\Xh$ can be
split as direct sums of an interior and a
trace component, that is
\begin{equation*}
  \Xh = \Xho \oplus \Rh(\traceh) .
\end{equation*}
Using the above splitting, the definition of $\Rh(\cdot)$ and the definition of
 $\calA_h(\cdot,\cdot)$, it is not difficult to verify that,
 \begin{align*}
\calA_h(w,v) &= \calA_h(w^0,v^0) +
\calA_h(\Rh(w_{|_\Sigma}),\Rh(v_{|_\Sigma}))  \\
&= a(w^0,v^0) + s(w_{|_\Sigma},v_{|_\Sigma}), \qquad \qquad  \forall\, w,v \in
\Xh
\end{align*}
where the {\em discrete Steklov-Poincar\'e} operator  $s: \traceh
\times \traceh \to \RR$ is defined as
\begin{equation}\label{eqn:steklov}
s(\xi,\eta) =   \calA_h(\Rh(\xi),\Rh(\eta)) \qquad \forall\, \xi , \eta \in \traceh\;.
\end{equation}

We have the following result:

\begin{lemma}\label{lifting_ok}
Let $\Rh$ be the discrete harmonic lifting defined in \eqref{def:Rh}. Then,
\begin{equation*}
\normAh{\Rh(\eta)} \simeq \ShnormDG{ \eta} \qquad \forall\, \eta\in \traceh\;.
\end{equation*}
\end{lemma}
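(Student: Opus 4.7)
The plan is to use the equivalence $\Shnorm{\eta} \simeq \ShnormDG{\eta}$ already proved in Lemma~\ref{equivT}, and then show the intermediate equivalence $\normAh{\Rh(\eta)} \simeq \Shnorm{\eta}$. The latter is essentially a Céa-type statement: among all discrete liftings of $\eta$ to $\Xh$, the harmonic one $\Rh(\eta)$ is, up to the continuity/coercivity constants of $\calA_h(\cdot,\cdot)$, the minimizer of $\normAh{\cdot}$.

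First I would establish the upper bound $\Shnorm{\eta} \leq \normAh{\Rh(\eta)}$, which is immediate from the defining infimum in \eqref{Snormh}, since $\Rh(\eta)\in \Xh$ is a valid competitor (its trace on $\Sigma$ equals $\eta$).

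For the reverse inequality, I would pick an arbitrary $u \in \Xh$ with $u_{|_\Sigma} = \eta$ and observe that $u - \Rh(\eta) \in \Xho$, because both have the same trace $\eta$ on $\Sigma$. Then by the Galerkin-type orthogonality built into the definition \eqref{def:Rh} we obtain $\calA_h(\Rh(\eta),u-\Rh(\eta))=0$, hence
\begin{equation*}
\calA_h(\Rh(\eta),\Rh(\eta)) = \calA_h(\Rh(\eta),u).
\end{equation*}
Invoking the coercivity of $\calA_h(\cdot,\cdot)$ on the left and its continuity on the right (both with respect to $\normAh{\cdot}$, as recalled after \eqref{def:normA}) yields
\begin{equation*}
\normAh{\Rh(\eta)}^2 \lesssim \calA_h(\Rh(\eta),u) \lesssim \normAh{\Rh(\eta)}\,\normAh{u},
\end{equation*}
so $\normAh{\Rh(\eta)} \lesssim \normAh{u}$. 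Taking the infimum over all admissible $u$ gives $\normAh{\Rh(\eta)} \lesssim \Shnorm{\eta}$.

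Combining the two directions yields $\normAh{\Rh(\eta)} \simeq \Shnorm{\eta}$, and chaining this with the equivalence $\Shnorm{\eta} \simeq \ShnormDG{\eta}$ from Lemma~\ref{equivT} gives the claim. There is no real obstacle here: the only substantive content is the equivalence of trace norms $\Shnorm{\cdot}\simeq \ShnormDG{\cdot}$, which was already the difficult step carried out in Lemma~\ref{equivT}; the present lemma only has to translate this to the specific lifting $\Rh$ via continuity/coercivity of the Nitsche bilinear form.
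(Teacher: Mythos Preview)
Your proof is correct and follows essentially the same strategy as the paper: reduce to the equivalence $\normAh{\Rh(\eta)} \simeq \Shnorm{\eta}$ and then invoke Lemma~\ref{equivT}. The only cosmetic difference is in the C\'ea-type step: the paper writes $\Rh(\eta)=u+v$ with $v\in\Xho$, bounds $\normAh{v}$ via coercivity/continuity, and finishes with the triangle inequality, whereas you use the Galerkin orthogonality $\calA_h(\Rh(\eta),u-\Rh(\eta))=0$ directly to get $\calA_h(\Rh(\eta),\Rh(\eta))=\calA_h(\Rh(\eta),u)$ and conclude in one line; both are the same argument.
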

\begin{proof}
If we  show that
$\normAh{\Rh(\eta)} \simeq \Shnorm{ \eta}$,
then the thesis follows thanks to the equivalence of the norms shown in Lemma~\ref{equivT}. First, we prove that
$\normAh{\Rh(\eta)} \lesssim \Shnorm{ \eta}$;
let $\eta\in \traceh$,  then from the  definition of the  $\inf$,
we get that
\begin{equation*}
\exists u\in \Xh\,:\, u_{|\Sigma} = \eta \quad \mbox{ such that   }
\quad  \normAh{u} \leq 2 \Shnorm{\eta}\;.
\end{equation*}
Then, we can write  $\Rh(\eta) = u+v$  with $v\in\Xho$, and \nref{def:Rh} reads
\begin{equation*}
\calA_h(v,w) = -\calA_h(u,w)\qquad \forall w \in \Xho\;.
\end{equation*}

Setting $w=v \in \Xho$ in the above equation, leads to
      $$  \calA_h(v,v) = -\calA_h(u,v)\;. $$
 Then,  using the
 coercivity and continuity of
$\calA_h(\cdot,\cdot)$ in the $\normAh{\cdot}$-norm
we find
\begin{equation*}
 \normAh{v}^{2}
\lesssim \calA_h(v,v)=|\calA_h(u,v)|
 \lesssim \normAh{u}\normAh{v}\;.
\end{equation*}
Hence, $\normAh{v} \lesssim \normAh{u}$, and so this bound together with the triangle inequality gives
\begin{equation*}
\normAh{\Rh(\eta)}\leq  \normAh{u}+\normAh{v} \lesssim
\normAh{u} \lesssim
\Shnorm{\eta}\;.
\end{equation*}
The other inequality
$\Shnorm{ \eta} \lesssim  \normAh{\Rh(\eta)}$
follows from the trace theorem.
\end{proof}
 From the above result, the following result for the \emph{discrete Steklov-Poincar\`e} operator follows easily.
\begin{corollary}\label{equiv_norm_Ttau}
For all   $\xi \in \traceh$, it holds
\begin{equation*}
 s(\xi,\xi) \simeq \ShnormDG{\xi}^2.
\end{equation*}
\end{corollary}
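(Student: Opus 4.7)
The plan is to chain together two equivalences that are already available at this point of the paper. Specifically, by definition \eqref{eqn:steklov} we have
\begin{equation*}
s(\xi,\xi) = \calA_h(\Rh(\xi),\Rh(\xi)),
\end{equation*}
so the first step is to control this quantity by $\normAh{\Rh(\xi)}^2$, and the second step is to convert the latter into $\ShnormDG{\xi}^2$ via Lemma~\ref{lifting_ok}.

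For the first step, I would invoke the coercivity and continuity of $\calA_h(\cdot,\cdot)$ with respect to the norm $\normAh{\cdot}$, which the paper records just after the definition of the method (and which hold under the standing assumption $\alpha \ge \alpha_0$). Coercivity gives the lower bound $\calA_h(\Rh(\xi),\Rh(\xi)) \gtrsim \normAh{\Rh(\xi)}^2$, while continuity, applied with both arguments equal to $\Rh(\xi)$, yields the matching upper bound $\calA_h(\Rh(\xi),\Rh(\xi)) \lesssim \normAh{\Rh(\xi)}^2$. Hence
\begin{equation*}
s(\xi,\xi) \simeq \normAh{\Rh(\xi)}^2.
\end{equation*}

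For the second step, I simply quote Lemma~\ref{lifting_ok}, which states $\normAh{\Rh(\eta)} \simeq \ShnormDG{\eta}$ for every $\eta \in \traceh$. Squaring this equivalence and combining it with the previous display gives $s(\xi,\xi) \simeq \ShnormDG{\xi}^2$, as required. There is no real obstacle here: the corollary is a one-line consequence of Lemma~\ref{lifting_ok} together with the already established coercivity/continuity of $\calA_h$, and the only thing worth being mindful of is that the hidden constants depend on $\alpha_0$ and on the shape regularity of $\Th$ but not on $h$, $p$, $H$, or $\Ndom$, exactly as in the statements of the ingredients being used.
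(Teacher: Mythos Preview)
Your proof is correct and follows exactly the same route as the paper: invoke the definition of $s(\cdot,\cdot)$, use continuity and coercivity of $\calA_h$ to replace $\calA_h(\Rh(\xi),\Rh(\xi))$ by $\normAh{\Rh(\xi)}^2$, and then apply Lemma~\ref{lifting_ok}. There is nothing to add.
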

\begin{proof}
Let $\xi\in \traceh$ then from the  definition of $s(\cdot,\cdot)$, the continuity and coercivity
of $\calA_h(\cdot,\cdot)$  and applying Lemma~\ref{lifting_ok} we have
\begin{equation*}
s(\xi,\xi) = \calA_h(\Rh(\xi),\Rh(\xi)) \simeq \normAh{\Rh(\xi)}^2\simeq \ShnormDG{\xi}^2.
\end{equation*}
\end{proof}

%

\subsection{The preconditioner}\label{sec:5.2}

 Following the approach introduced in \cite{BPS}, we now present the
 construction of a preconditioner for the discrete Steklov-Poincar\'e
 operator given by $s(\cdot,\cdot)$.
We split the space of skeleton functions $\traceh$ as the sum of {\em vertex} and {\em edge} functions.
We start by observing that
$\Lineari \subset \traceh$.
%
We then  introduce the space of {\em edge} functions
$\traceh^E \subset \traceh$ defined by
\begin{equation*}
  \traceh^E = \{ \eta \in \traceh,~ \eta_\ell(A)=0~
  \text{ ~at all vertex $A$ of} ~\omk \quad \forall\, \omk\in \Ts \}
\end{equation*}
and we immediately get
\begin{equation} \label{eq:11}
  \traceh = \Lineari \oplus \traceh^E .
\end{equation}
%
The preconditioner $\shat(\cdot,\cdot)$ that we consider is built
by introducing  bilinear forms
\begin{equation*}
\begin{aligned}
& \sE:\traceh^E \times \traceh^E \lor \RR
&& \qquad  \sV:\Lineari \times \Lineari \lor \RR
\end{aligned}
\end{equation*}
acting respectively on edge and vertex functions, satisfying
\begin{align}
\sE(\eta^E,\eta^E) &
\simeq \sum_{\omk\in \Ts}\sum_{E\subset \partial\omk}\norm{\eta^E}^2_{H^{1/2}_{00}(E)}
&& \forall\, \eta^{E}\in \traceh^E, \label{eq:sE}\\
\sV(\eta^V,\eta^V) &\simeq \sum_{\omk\in \Ts}
\left |\eta^V\right |^2_{H^{1/2}(\partial \omk)}
&& \forall\, \eta^{V} \in \Lineari,
\label{eq:sV}
\end{align}%
and we define $\shat: \traceh\times \traceh \lor \RR$ as
\begin{equation}
  \label{shat}
  \shat(\eta,\xi) =  \sE(\eta^E,\xi^E) + \sV(\eta^V,\xi^V) + q(\eta,\xi),
\end{equation}
where \label{normDG}
\begin{align}
q(\eta,\eta) &= \alpha \sum_{e\in \Eh} \| \p \,\h^{-1/2}\jump{\eta}\|_{L^2(e)}^2
&&\forall\, \eta\in \traceh\;. \label{eq:jEV}
\end{align}%

Finally, we  can state the main theorem of the paper.
\begin{theorem}\label{precond}
Let $s(\cdot,\cdot)$ and $\shat(\cdot,\cdot)$ be the bilinear forms defined in \eqref{eqn:steklov} and \eqref{shat}, respectively. Then, we have:
  \begin{equation*}
  \faclog^{-2}  \shat(\eta,\eta) \lesssim s(\eta,\eta) \lesssim  \shat(\eta,\eta)  \qquad\forall\, \eta \in \traceh\;.
  \end{equation*}
\end{theorem}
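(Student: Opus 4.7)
The plan is to reduce everything to an equivalence between norms on $\traceh$ and then exploit the two technical Lemmas~\ref{lembsp2} and~\ref{lembsp}. By Corollary~\ref{equiv_norm_Ttau} we have $s(\eta,\eta)\simeq \ShnormDG{\eta}^2$, and since the jump contribution $q(\eta,\eta)$ in $\shat$ is \emph{identical} to the penalty term appearing in $\ShnormDG{\eta}^2$, it suffices to compare
\begin{equation*}
T(\eta) \;:=\; \sum_{\omk\in\Ts}|\eta|^2_{H^{1/2}(\partial\omk)}
\qquad\text{with}\qquad
\sE(\eta^E,\eta^E)+\sV(\eta^V,\eta^V),
\end{equation*}
using the splitting $\eta=\eta^V+\eta^E$ induced by \eqref{eq:11}. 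Note that $\eta^V\in\Lineari$ is well defined because the values of $\eta$ at subdomain vertices are single-valued inside each subdomain (functions in $\Vk$ are $C^0$ on $\overline\omk$).

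For the upper bound $s(\eta,\eta)\lesssim\shat(\eta,\eta)$ I would first apply the triangle inequality
\begin{equation*}
|\eta|_{H^{1/2}(\partial\omk)}^2 \;\lesssim\; |\eta^V|_{H^{1/2}(\partial\omk)}^2 + |\eta^E|_{H^{1/2}(\partial\omk)}^2.
\end{equation*}
The vertex part is controlled directly by $\sV$ via \eqref{eq:sV}. For the edge part, since $\eta^E$ vanishes at every vertex of $\omk$, its restriction to each subdomain edge $E\subset\partial\omk$ extends by zero to the rest of $\partial\omk$, and the classical inequality $|\widetilde\varphi|_{H^{1/2}(\partial\omk)}\lesssim \|\varphi\|_{H^{1/2}_{00}(E)}$ yields
\begin{equation*}
|\eta^E|_{H^{1/2}(\partial\omk)}^2 \;\lesssim\; \sum_{E\subset\partial\omk}\|\eta^E\|_{H^{1/2}_{00}(E)}^2,
\end{equation*}
which after summation over $\ell$ is bounded by $\sE(\eta^E,\eta^E)$ thanks to \eqref{eq:sE}. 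Adding the matching $q$-term on both sides concludes this direction with no logarithmic loss.

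For the lower bound, which is where the factor $\faclog^2$ arises, I would treat the two pieces of $\shat$ separately. Applying Lemma~\ref{lembsp2} to the pair $(\eta,\chi)=(\eta,\eta^V)$ gives, after using \eqref{eq:sV},
\begin{equation*}
\sV(\eta^V,\eta^V) \;\lesssim\; \faclog \; T(\eta).
\end{equation*}
For the edge contribution I would apply Lemma~\ref{lembsp} locally with $\xi=\eta^E_{|\partial\omk}$ (which vanishes at all vertices of $\omk$) and $\zeta_L=\eta^V_{|\partial\omk}$ (linear on each subdomain edge by construction), obtaining
\begin{equation*}
\sum_{E\subset\partial\omk}\|\eta^E\|_{H^{1/2}_{00}(E)}^2
\;\lesssim\; \faclogi^{2}\,|\eta^E+\eta^V|_{H^{1/2}(\partial\omk)}^2
= \faclogi^{2}\,|\eta|_{H^{1/2}(\partial\omk)}^2.
\end{equation*}
Summing over $\ell$ and using \eqref{eq:sE} with the convention \eqref{quot:convention} produces $\sE(\eta^E,\eta^E)\lesssim \faclog^2\,T(\eta)$. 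Combining these two bounds with the trivial equality $q(\eta,\eta)=q(\eta,\eta)$ and invoking Corollary~\ref{equiv_norm_Ttau} delivers the stated lower bound.

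The main obstacle is the lower bound on $\sE$: one has to ensure that applying Lemma~\ref{lembsp} on each subdomain with $\zeta_L=\eta^V$ is legitimate. This is precisely where the choice of $\eta^V$ as the piecewise-linear interpolant in $\Lineari$ matters, since it is linear on every subdomain edge so that $\eta^E+\eta^V=\eta$ on $\partial\omk$ and the $\faclogi^2$ factor can be pulled out. The $q$-term requires no work because it appears unchanged on both sides; this reflects the fact, stressed in the introduction, that the penalty cannot be decoupled into pure edge/vertex blocks and must be kept as a separate global contribution in $\shat$.
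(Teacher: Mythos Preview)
Your proposal is correct and follows essentially the same route as the paper's proof: both directions rely on Corollary~\ref{equiv_norm_Ttau} to replace $s(\cdot,\cdot)$ by $\ShnormDG{\cdot}^2$, then use the triangle inequality together with \eqref{eq:sE}--\eqref{eq:sV} for the upper bound, and Lemmas~\ref{lembsp2} and~\ref{lembsp} (applied with $\chi=\eta^V$ and with $\xi=\eta^E$, $\zeta_L=\eta^V$, respectively) for the lower bound, while the $q$-term is carried over unchanged. Your version is in fact slightly more explicit than the paper's in justifying the bound $|\eta^E|_{H^{1/2}(\partial\omk)}^2\lesssim \sum_{E\subset\partial\omk}\|\eta^E\|_{H^{1/2}_{00}(E)}^2$ via zero extension.
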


The proof of Theorem \ref{precond} follows the analogous proofs given
in \cite{BPS,Bsubstr} for conforming finite element approximation.
We give it here for completeness.

\begin{proof}
We start proving that
$s(\eta,\eta) \lesssim \shat(\eta,\eta)$. Let  $\eta \in \traceh$, then,
$\eta= \eta^V + \eta^E$ with $\eta^E\in \traceh^E$ and $\eta^V\in
\Lineari$.
By using Corollary~\ref{equiv_norm_Ttau}, as well as the properties \nref{eq:sE}-\nref{eq:sV} of the
edge and vertex bilinear forms,  and \nref{eq:jEV} of $q(\cdot,\cdot)$, we get
\begin{align*}
s(\eta,\eta) & \lesssim
\ShnormDG{\eta}^2 = \sum_{\omk\in\Ts} \snorm{\eta^E +
\eta^V}^2_{1/2,\partial\Omega_\ell} + \alpha \sum_{e\in \Eh} \| \p\,
\h^{-1/2} \jump{\eta}\|_{L^2(e)}^2  \\
& \lesssim   \sum_{\omk\in\Ts} \snorm{\eta^E}^2_{1/2,\partial\Omega_\ell} +
\sum_{\omk\in\Ts} \snorm{\eta^V}^2_{1/2,\partial\Omega_\ell} +
q(\eta,\eta)  \\
& \lesssim   \sE(\eta^E,\eta^E)  +   \sV(\eta^V,\eta^V)  +
q(\eta,\eta),
 \end{align*}
and hence
\begin{equation*}
s(\eta,\eta) \lesssim   \shat(\eta,\eta) \qquad\forall\, \eta \in \traceh\;.
\end{equation*}

\

We next prove the lower bound. We shall show that
 \begin{equation}
        \label{boundonb}
        \shat(\eta,\eta) \lesssim \faclog^2 s(\eta,\eta)  \qquad \forall\,  \eta \in \traceh\;.
      \end{equation}

For $\eta \in \traceh$, we have $\eta= \eta^V + \eta^E$ with $\eta^E\in  \traceh^E$ and $\eta^V\in
\Lineari$. Then, from the definition of $\shat(\cdot,\cdot)$ we have
\begin{align*}
 \shat(\eta,\eta)& = \sE(\eta^E,\eta^E) + \sV(\eta^V,\eta^V) + q(\eta,\eta) \\
   & \simeq   \sum_{\omk\in \Ts}\sum_{E\subset \partial\omk} \norm{\eta^E}^2_{H^{1/2}_{00}(E)}
 + \sum_{\omk\in \Ts}\left |\eta^V\right
 |^2_{H^{1/2}(\partial \omk)} +  \alpha \sum_{e\in \Eh} \| \p \,\h^{-1/2} \jump{\eta}\|_{L^2(e)}^2.
\end{align*}

Appling Lemma~\ref{lembsp}  with $\chi=\eta^{E}$ and $\zeta_{L}=\eta^{V}$, we obtain
\begin{equation*}
     \sum_{\omk\in \Ts}\sum_{E\subset \partial\omk}
  \norm{ \eta^E}^2_{H^{1/2}_{00}(E)} \lesssim
     \sum_{\omk\in \Ts}\faclogi^2 \snorm{ \eta}^2_{H^{1/2}(\omk)}, 
  \end{equation*}
  that is
\begin{equation*}
\sE(\eta^E,\eta^E) \lesssim \faclog^2  \sum_{\omk\in
  \Ts} \snorm{\eta}^2_{H^{1/2}(\partial \omk)}   .
\end{equation*}

To bound $\sV(\eta^V,\eta^V)$, we apply Lemma~\ref{lembsp2}  with $\chi^{\ell}=\eta^{V}$ and $\eta^{\ell}=\eta$, and we get
\begin{align*}
\sV(\eta^V,\eta^V) \lesssim \sum_{\omk\in \Ts}
\snorm{\eta^V}^2_{H^{1/2}(\partial \omk)} \lesssim \faclog \sum_{\omk\in
  \Ts} \snorm{\eta}^2_{H^{1/2}(\partial \omk)},
\end{align*}
and hence
\begin{equation*}
\sE(\eta^E,\eta^E)  + \sV(\eta^V,\eta^V)  \lesssim \faclog^2
\sum_{\omk\in\Ts} \snorm{\eta}^2_{H^{1/2}(\partial \omk)}  .
\end{equation*}
Adding now the term $ \alpha \sum_{e\in \Eh}\| \p \h^{-1/2} \jump{\eta}\|_{L^2(e)}^{2}$
to both sides and recalling the definition of $q(\cdot,\cdot)$ we have:
\begin{align*}
 \shat(\eta,\eta)& =\sE(\eta^E,\eta^E)   + \sV(\eta^V,\eta^V) +q(\eta,\eta) && \\
& \lesssim \faclog^2 \left(
\sum_{\omk\in
  \Ts} \snorm{\eta}^2_{H^{1/2}(\partial \omk)} +  \alpha \sum_{e\in \Eh}\| \p
\h^{-1/2} \jump{\eta}\|_{L^2(e)}^{2} \right)  && \\
& =  \faclog^2  \ShnormDG{\eta}^2. &&
\end{align*}

Finally, using the equivalence norm given in Corollary~\ref{equiv_norm_Ttau}, we reach \eqref{boundonb} and the proof of the Theorem is completed.

  \end{proof}

As a direct consequence of Theorem \ref{precond} we obtain the
following estimate for the condition number of the preconditioned
Schur complement.
\begin{corollary}\label{cond}
  Let $\bS$ and $\bP$ be the matrix representation of the bilinear forms $s(\cdot,\cdot)$ and $\shat(\cdot,\cdot)$, respectively. Then, the condition number of $\bP ^{-1} \bS$,   $ \kappa (\bP ^{-1} \bS)$, satisfies
\begin{equation}
  \kappa (\bP ^{-1} \bS) \lesssim \faclog^2.
\end{equation}
\end{corollary}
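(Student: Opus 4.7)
The plan is to deduce the condition number bound directly from the spectral equivalence established in Theorem~\ref{precond} by a standard Rayleigh quotient argument. The key observation is that both bilinear forms $s(\cdot,\cdot)$ and $\shat(\cdot,\cdot)$ are symmetric and positive definite on $\traceh$: for $s(\cdot,\cdot)$ this follows from the definition \eqref{eqn:steklov} together with the coercivity of $\calA_h(\cdot,\cdot)$ proved in Section~\ref{problem}, while for $\shat(\cdot,\cdot)$ it follows from the positivity properties \eqref{eq:sE}--\eqref{eq:sV} of $\sE$, $\sV$ and the non-negativity of $q(\cdot,\cdot)$. Hence the matrices $\bS$ and $\bP$ representing these bilinear forms in any chosen basis of $\traceh$ are symmetric and positive definite.

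Next, I would identify the spectrum of $\bP^{-1}\bS$ with the generalized eigenvalues of the pencil $(\bS,\bP)$, which coincide with the extremal values of the Rayleigh quotient
\begin{equation*}
\lambda(\bxi) = \frac{\bxi^{T} \bS \bxi}{\bxi^{T} \bP \bxi} = \frac{s(\eta,\eta)}{\shat(\eta,\eta)}, \qquad \bxi \neq 0,
\end{equation*}
where $\eta \in \traceh$ is the function whose coefficient vector is $\bxi$. By Theorem~\ref{precond}, this quotient satisfies
\begin{equation*}
C_1 \faclog^{-2} \leq \frac{s(\eta,\eta)}{\shat(\eta,\eta)} \leq C_2
\end{equation*}
for some constants $C_1, C_2 > 0$ independent of $h$, $H$, $p$ and the number of subdomains.

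Consequently $\lambda_{\max}(\bP^{-1}\bS) \leq C_2$ and $\lambda_{\min}(\bP^{-1}\bS) \geq C_1 \faclog^{-2}$, so that
\begin{equation*}
\kappa(\bP^{-1}\bS) = \frac{\lambda_{\max}(\bP^{-1}\bS)}{\lambda_{\min}(\bP^{-1}\bS)} \leq \frac{C_2}{C_1} \faclog^2,
\end{equation*}
which gives the claimed estimate. I do not expect any real obstacle here: the entire content of the corollary is contained in Theorem~\ref{precond}, and the proof is essentially a one-line translation from bilinear form equivalence to matrix condition number, modulo the standard remarks on symmetry, positive definiteness, and the identification of generalized eigenvalues with Rayleigh quotients.
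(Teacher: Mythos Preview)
Your proposal is correct and matches the paper's treatment: the paper states the corollary as ``a direct consequence of Theorem~\ref{precond}'' without giving any proof, and what you have written is precisely the standard Rayleigh-quotient translation from bilinear-form equivalence to a condition-number bound that the authors leave implicit. One small remark: your justification that $\shat(\cdot,\cdot)$ is positive definite via \eqref{eq:sE}--\eqref{eq:sV} is slightly loose, since $\sV$ only controls a seminorm; the cleanest route is simply to invoke the lower bound in Theorem~\ref{precond} itself, which gives $\shat(\eta,\eta)\gtrsim s(\eta,\eta)>0$ for $\eta\neq 0$.
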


Unfortunately,  the splitting (\ref{eq:11}) of $\Phi_h$ is not
orthogonal with respect to the $\hat{s}(\cdot,\cdot)$-inner product given
in \eqref{shat}, and therefore the preconditioner based on
$\hat{s}(\cdot,\cdot)$ is not block diagonal, in contrast to what
happens in the full conforming case. Furthermore the off-diagonal
blocks in the preconditioner cannot be dropped without loosing the
quasi-optimality.  The reason is the presence of the $q(\cdot,\cdot)$
bilinear form in the definition \eqref{shat}, and the fact that the
two components in the splitting (\ref{eq:11}) of $\Phi_h$ scale
differently in the semi-norm that $q(\cdot,\cdot)$ defines.
In fact, it is possible to show that, if for some constant $\kappa(h)$, it holds
\begin{equation}\label{false-dis}
\begin{aligned}
&\| \eta^V \|_{\Phi_h,*}^2\leq \kappa(h)\| \eta \|_{\Phi_h,*}^2
&& \forall \eta = \eta^V+\eta^E \in \Phi_h,
\end{aligned}
\end{equation}
then such $\kappa(h)$ must verify
$\kappa(h)\gtrsim H/h$, which implies that, if we were to use a fully block diagonal preconditioner based on the splitting (\ref{eq:11}) of $\Phi_h$  an estimate of the form (\ref{boundonb}) would no be longer true.
In order to show this, consider linear finite elements on quasi uniform meshes with meshsize $h$ in all subdomains, and let $\eta = (\eta^\ell)_{\ell}$ be the function identically vanishing in all subdomains but one, say $\Omega_k$, and let $\eta^k$ be equal to $1$ in a single vertex of $\Omega_k$ and zero at all other nodes. With this definition, we have $|\jump{\eta}| = |\eta^k|$ on $\partial \Omega_k$ and $\jump{\eta}=0$ on $\Sigma\setminus\partial\Omega^k$. Then, by a direct calculation, and recalling the definition of the semi-norm $|\cdot|_{\ast,\Eh}$ in \eqref{eq:seminorms}, we easily see that
$$|\eta|^{2}_{\ast,\Eh}\simeq 1, \quad \mbox{ but }\qquad |\eta^{V}|^{2}_{\ast,\Eh}\simeq \frac{H_k}{h_k} $$
or equivalently
\begin{equation}\label{controesempio1}
  q(\eta^V,\eta^V) \simeq \frac {H_k}{h_k}\;, \qquad  q(\eta,\eta)\simeq 1.
\end{equation}
Therefore the energy of {\it coarse interpolant} $\eta^{V}$ exceeds that of $\eta$ by a factor of $H_k/h_k$.
Hence, bounding  $\eta^{V}$ alone in the $\| \cdot\|_{\Phi_h,*}$-norm would result in an estimate of the type \eqref{false-dis}
\begin{equation}\label{controesempio2}
q(\eta^V,\eta^V) \lesssim  \| \eta^V \|_{\Phi_h,*}^2 \lesssim   \kappa(h) q(\eta,\eta),
\end{equation}
which in view of (\ref{controesempio1}) would imply
\[ \kappa(h) \gtrsim \frac{H_k}{h_k}. \]

\begin{remark}
 We point out that the lack of the
block-diagonal structure of the
preconditioner associated to  $\shat(\cdot,\cdot)$ defined in \nref{shat},
will not affect its computational efficiency, see
Section~\ref{sec:expes}.
\end{remark}


\section{Realizing the preconditioner}\label{sec:variants}


We start by deriving the matrix form of the discrete
Steklov-Poincar\'e operator $s(\cdot,\cdot)$ defined in \nref{eqn:steklov}.
We choose a Lagrangian nodal basis for the discrete space $X_{h}$, and we take care of
 numbering interior degrees of freedom first (grouped subdomain-wise),
 then edge degrees of freedom (grouped edge by edge and in such a way
 that the degrees of freedom corresponding to the common edge of two
 adjacent subdomains are ordered consecutively), and finally the
 degrees of freedom corresponding to the vertices of the subdomains.
%
We let $\iin_{\ii}$, $\iin_{\ie}$ and $\iin_{\iv}$ be the number of
interior, edge and vertex degrees of freedom, respectively, and set
$\iin=\iin_{\ie}+\iin_{\iv}$
Problem \nref{discrete_problem} is then reduced to looking for a vector  ${\bu}\in \re^{\iin_\ii+\iin}$
with
$\bu =(\bu_{\ii},\bu_{\ie}, \bu_{\iv})$
solution to a linear system of the following form
\begin{equation*}
\begin{pmatrix}
    \bA_{\ii \ii}       & \bA_{\ii \ie}   & \bA_{\ii \iv}    \\
    \bA_{\ii \ie}^T & \bA_{\ie \ie}  & \bA_{\ie \iv}    \\
    \bA_{\ii \iv}^T & \bA_{\ie \iv}^T & \bA_{\iv \iv}
\end{pmatrix}
\begin{pmatrix}
\bu_{\ii} \\ \bu_{\ie} \\\bu_{\iv}
\end{pmatrix} =
\begin{pmatrix}
\bF_{\ii} \\ \bF_{\ie} \\\bF_{\iv}
\end{pmatrix}.
\end{equation*}

Here, $\bu_{\ii}\in \re^{\iin_{\ii}}$ (resp. $\bF_{\ii} \in \re^{\iin_{\ii}}$) represents the
unknown (resp. the right hand side) component associated to interior nodes.
Analogously, $\bu_{\ie}, \bF_{\ie} \in \re^{\iin_{\ie}}$ and
$\bu_{\iv}, \bF_{\iv} \in \re^{\iin_{\iv}}$ are associated to edge and
vertex nodes, respectively. We recall that for each vertex we have one
degree of freedom for each of the subdomains sharing it. For each
macro edge $E$, we will have two sets of nodes (some of them possibly
physically coinciding) corresponding to the degrees of freedom of $\Phi_h^{\ell^+}(E)$ and of $\Phi_h^{\ell^-}(E)$.

\

As usual, we start by eliminating the interior degrees of freedom, to obtain
the Schur complement system
\begin{equation*}
\bS
\begin{pmatrix}
\bu_{\ie} \\ \bu_{\iv}
\end{pmatrix}
=\bg,
\end{equation*}
with
\begin{equation*}\label{eq:defS}
\begin{aligned}
&\bS =
\begin{pmatrix}
\bA_{\ie \ie} - \bA_{\ii \ie}^T \bA_{\ii \ii}^{-1} \bA_{\ii \ie}
& \bA_{\ie \iv}-\bA_{\ii \ie}^T \bA_{\ii \ii}^{-1} \bA_{\ii \iv}\\
 \bA_{\ie \iv}^T-\bA_{\ii \iv}^T \bA_{\ii \ii}^{-1} \bA_{\ii \ie}
 & \bA_{\iv \iv}-\bA_{\ii \iv}^T \bA_{\ii \ii}^{-1} \bA_{\ii \iv}
\end{pmatrix},
&&
\bg =
\begin{pmatrix}
\bF_E - \bA_{\ii \ie}^T \bA_{\ii \ii}^{-1} \bF_{\ii} \\
\bF_V - \bA_{\ii \iv}^T \bA_{\ii \ii}^{-1} \bF_{\ii}
\end{pmatrix}.
\end{aligned}
\end{equation*}
The Schur complement $\bS$ represents
the matrix form of the Steklov-Poincar\'e operator $s(\cdot, \cdot)$. Remark that in practice we do not need to actually assemble $\bS$ but only to be able to compute its action on vectors.

\

In order to implement the preconditioner introduced in the previous
section we need to represent algebraically the splitting of the trace space given by
\eqref{eq:11}.
 As defined
in \nref{eq:Lineari},
we consider the space $\Lineari$ of functions
that are linear
on each subdomain edge,  and introduce the
matrix representation of the injection of $\Lineari$ into $\Phi_h$.
More precisely, we let
$\bXi = \{ \bx_{i}, \, i=1,\ldots,\iin_{\ie},
\iin_{\ie}+1,\ldots, \iin_{\ie}+\iin_{\iv}\}$
be the set of edge and
vertex 
degrees of freedom.
For any vertex degree of freedom
$\bx_j$, $j=\iin_{\ie}+1,\ldots,\iin_{\ie}+\iin_{\iv}$ , let $\varphi_{j}(\cdot)$ be the piecewise polynomial that is linear on each subdomain edge and that satisfies
\begin{equation*}
\begin{aligned}
\varphi_{j}(\bx_k)=\delta_{jk}
&& j,k=\iin_{\ie}+1,\ldots,\iin_{\ie}+\iin_{\iv}.
\end{aligned}
\end{equation*}

%
The matrix $\bR^{T}\in \re^{\iin\times \iin_{\iv}}$ realizing the
linear interpolation of vertex values 
 is then defined as
\begin{equation*}
\begin{aligned}
&\bR^{T}(i,j-\iin_{\ie}+1)=\varphi_{j}(\bx_{i}),
&& i=1,\ldots,\iin,
&& j=\iin_{\ie}+1,\ldots,\iin_{\ie}+\iin_{\iv}.
\end{aligned}
\end{equation*}
Next, we define a square matrix $\widetilde{\bR}^{T}\in \re^{\iin \times \iin}$ as
\begin{equation*}
 \widetilde{\bR}^{T} = \left(
 \begin{aligned}
& \begin{pmatrix}
 \bI_{\ie} \\ {\bf 0}
 \end{pmatrix}
 && \bR^{T}
 \end{aligned}
 \right),
 \end{equation*}
$\bI_{\ie} \in \re^{\iin_{\ie}\times \iin_{\ie}}$ being the identity matrix.
Let now $\widetilde{\bS}$ be the
matrix obtained after applying the change of basis corresponding
to switching from the standard nodal basis to the  basis related
to the splitting \nref{eq:11}, that is
\begin{equation}\label{eq:Smodified}
\widetilde{\bS} = \widetilde{\bR} \bS \widetilde{\bR}^T =
     \left  (
  \begin{array}{cc}
    \widetilde{\bS}_{\ie \ie} & \widetilde{\bS}_{\iv \ie}\\
     \widetilde{\bS}_{\iv \ie}^T & \widetilde{\bS}_{\iv \iv}
  \end{array}\right ) .
\end{equation}
%
%
%
Our problem is then reduced to the solution of a transformed Schur complement system
 \begin{equation}\label{eq:Schur_system_modified}
\widetilde{\bS} \, \widetilde{\bu}= \widetilde{\bg},
\end{equation}
where $\widetilde{\bu} = \widetilde{\bR}^{-T} \bu$ and
$\widetilde{\bg}= \widetilde{\bR} \bg$.

\

\noindent
{\it The preconditioner $\bP$}.
The preconditioner $\bP$ that we propose is obtained as matrix
counterpart of \nref{shat}.
In the literature it is possible to find different ways to build bilinear forms
$\hat s^E(\cdot,\cdot)$, $\hat s^V(\cdot,\cdot)$
that satisfy \nref{eq:sE} and \nref{eq:sV}, 
respectively.
The choice that we make here for defining 
$\hat s^E(\cdot,\cdot)$ 
 is the one proposed in \cite{BPS} and it is based on an
equivalence result for the $H^{1/2}_{00}$ norm.
We revise now its construction.
Let
$l_0(\cdot)$
denote the discrete operator defined on $\Phi^0_\ell(E)$ associated to the finite-dimensional approximation of $-\partial^2/\partial s^2$ on $E$. It is defined by:
\begin{equation}\label{eqH1200}
\langle l_0 \varphi,\phi\rangle_E = (\varphi^\prime,\phi^\prime)_E\qquad
\forall
 \phi \in \Phi^0_\ell(E),
\end{equation}
where the prime
superscript
refers, as usual, to the derivative $\partial/\partial s$ with
respect to the arc length $s$ on $E$.
Notice that,
since
$l_0(\cdot)$
is symmetric and positive definite, its square root can be defined.
Furthermore, 
it can be shown that
$$\norm{\varphi}_{H^{1/2}_{00}(E)}\simeq
 (l_0^{1/2}\varphi, \varphi)^{1/2}_{E},$$
see \cite{BPS}.
Then, we define
\begin{equation}\label{sEprec}
\sE(\eta^E,\xi^E) = \sum_{\omk\in
  \Ts}\sum_{E\subset \partial\omk}  (l_0^{1/2}\eta^E,\xi^E)_{E}\; \qquad \forall\, \eta^E, \xi^E \in \Phi^0_\ell(E).
\end{equation}
 For $\eta^E\in \Phi^0_\ell(E)$ we denote by $\bbeta^E$ its vector
 representation.
Then, it can be verified that, for each subdomain edge
$E\subset \partial \omk$, we have (see \cite{Bjorstad:1986:IMS}
pag. 1110 and \cite{Dryja:1981:CMM})
\begin{equation*}
 (l_0^{1/2}\eta^E, \eta^E)_{E}
={\bbeta^E}^T  \widehat \bK_E\bbeta^E
\end{equation*}
where $ \widehat \bK_E=  \bM_E^{1/2} ( \bM_E^{-1/2}  \bR_E
   \bM_E^{-1/2})^{1/2}  \bM_E^{1/2}$, and where $ \bM_E$ and $ \bR_E$ are the 
   mass and stiffness matrices associated to the discretization
of the operator $-\textrm{d}^2/\textrm{ds}^2$ (in $\Phi^0_\ell(E)$) with
homogeneous Dirichlet boundary conditions at the extrema $a$ and $b$
of $E$.

Observe, that for each macro edge $E$ shared by the subdomains
$\Omega_{\ell^+}$ and $\Omega_{\ell^-}$,
$ \widehat \bK_E$
is a two by two block diagonal matrix of the form
\begin{equation*}
 \widehat \bK_E =
\begin{pmatrix}
\widehat \bK_E^{+} & {\bf 0}\\
{\bf 0} & \widehat \bK_E^{-} \\
\end{pmatrix},
\end{equation*}
where $\widehat \bK_E^\pm $
are the contributions  from the subdomains $\Omega_{\ell^\pm }$  sharing the macro-edge $E$.
%
%
%
As far as the vertex bilinear form $\hat s^V(\cdot,\cdot)$ is concerned, we choose:
\begin{equation}\label{sv}
\hat s^V(\eta^V,\eta^V) =  \sum_{\omk\in \Ts}\int_{\omk}
\nabla({\cal H}_h^\ell\eta^\ell) \cdot \nabla({\cal
  H}_h^\ell\eta^\ell)
\dd{x},
\end{equation}
where $\cal H(\cdot)$ denotes the standard discrete harmonic lifting \cite{BPS,Xu.Zou}.
We observe that if the $\omk$'s are rectangles, for $\eta\in\Lineari$ we
have that ${\cal H}_h^\ell\eta^\ell$ is the $\mathbb{Q}^{1}(\omk)$ polynomial that coincides with $\eta^\ell$
at the four vertices of $\omk$. Computing $\hat s^V(\eta^V,\xi^V)$ for
$\eta^V,\xi^V\in \Lineari$ is therefore easy, since it is reduced to compute the local (associated to $\omk$) stiffness matrix for $\mathbb{Q}^{1}(\omk)$ polynomials.

\begin{remark}
A similar construction also holds for quadrilaterals which are affine
images of the unit square, and for triangular domains.
In fact, if $\omk$ is a triangle then for $\eta\in\Lineari$ we
have that ${\cal H}_h^\ell\eta^\ell$ is the $\mathbb{P}^{1}(\omk)$ function coinciding with $\eta^\ell$
at the three vertices of $\omk$. If $\omk$ is the affine image of the unit square, we work by using the harmonic lifting on the reference element.
\end{remark}
The preconditioner $\bP $ can then be written as:
\begin{equation}\label{P2}
\bP  \!=\!\! \left(
 \begin{array}{ccccc}
\bK_{E_1} & 0 & 0 &0&0\\
0 & \bK_{E_2} & 0 &0&0\\
0 & 0& \ddots &0&0\\
0 & 0 &  0&\bK_{E_M}&0 \\
0&0 & 0 & 0& \bP_{\iv \iv}
 \end{array}
\right)  \!+  
\widetilde{\bQ}
\,,
\end{equation}
where for each macro edge $E_i$,
\[\bK_{E_i}=\begin{pmatrix}
(\widehat  \bK_{E_i}^{+})^{1/2} & 0 \\
0 & (\widehat  \bK_{E_i}^{-})^{1/2}
\end{pmatrix}.
\]
 In \eqref{P2} $\bP_{\iv\iv}$ is defined as
 the matrix counterpart of \nref{sv}
whereas $\widetilde{\bQ}= \widetilde{\bR} \bQ \widetilde{\bR}^T$ and
$$\bQ = \left(
 \begin{array}{cccc}
\bQ_{E_1} & 0 & 0 & \bQ_{E_1 V} \\
0 & \bQ_{E_2} & 0 & \bQ_{E_2 V} \\
0 & 0 & \ddots & \vdots \\
\bQ_{E_1 V}^T & \bQ_{E_2 V}^T
& \cdots & \bQ_{\iv \iv},
 \end{array}
\right)
$$
is the matrix counterpart of \nref{eq:jEV}.
Remark that, due to the structure of the off diagonal blocks of $\bQ$, $\bP$ is low-rank
perturbation of an invertible block diagonal matrix.
The action of $\bP^{-1}$ can therefore be easily computed,  see
e.g. \cite{Demmel_1997} sec.2.7.4, p. 83.

\

{\it The preconditioner ${\bP}_\star$}.
For comparison we introduce a preconditioner ${\bP}_\star$ with the
same block structure of $\bP$ but with the elements of the non-zero
blocks coinciding with the corresponding elements of
$\widetilde{\bS}$. We expect this preconditioner to be the best that
can be done within the block structure that we want our preconditioner
to have.
In order to do so, we replace the $\widetilde{\bS}_{\ie\ie}$ component
of $\widetilde{\bS}$ with
the matrix obtained by dropping
all couplings between the degrees of freedom corresponding to nodes
belonging to different macro
edges, and use the resulting matrix as preconditioner.
More precisely, for any subdomain edge $E_{k}$ of the subdomain partition, $k=1,\ldots,M$, let
${\bJ}_{k} \in \re^{\iin_{\ie}\times\iin_{\ie}}$ be the diagonal matrix that extract only
the edge degrees of freedom belonging to the macro edge $E_{k}$,
i.e.,
\begin{equation*}
{\bJ}_{k}(i,j)=
\left\{
\begin{aligned}
&1 &&\textrm{if  $i=j$ and $\bx_{i} \in E_{k}$}\\
& 0 &&\textrm{otherwise}
\end{aligned}
\right.
\quad
i,j=1,\dots,\iin_{\ie}.
\end{equation*}
Then, we define
\begin{equation*}
\widetilde{\bP}_{\ie \ie} =\sum_{k=1}^{m} {\bJ}_{k}^{T} \widetilde{\bS}_{\ie \ie} {\bJ}_{k}
\end{equation*}
This provides our preconditioner
\begin{equation}\label{eq:defP1}
\bP_{\star}
  = \left (\begin{array}{cc}
       \widetilde{\bP}_{\ie\ie} & \widetilde{\bS}_{\ie \iv}  \\
       \widetilde{\bS}_{\ie \iv}^T & \widetilde{\bS}_{\iv \iv}
     \end{array}\right ) .
\end{equation}
Building this preconditioner implies the need of assembling at least
part of the Schur complement; this is quite expensive and therefore
this preconditioner is not feasible in practical applications.
\begin{remark}\label{rem:defP1diag}
Note that we cannot drop the coupling between edge and vertex
points, i.e. we cannot eliminate the off-diagonal blocks
$\bQ_{E_i V},\bQ_{E_i V}^T$. Indeed,
as already pointed out at the end of Section \ref{sec:5.2},
with the splitting (\ref{eq:11}) of $\Phi_h$ it is not
possible to design a block diagonal preconditioner without losing
quasi-optimality. In Section~\ref{sec:expes} we will present some
computations that show that the preconditioner \label{normDG}
\begin{equation}\label{eq:defP1diag}
\bP_{D}
  = \left (\begin{array}{cc}
       \widetilde{\bP}_{\ie\ie} & 0\\
    0 & \widetilde{\bS}_{\iv \iv}
     \end{array}\right ) ,
\end{equation}
is not optimal.
\end{remark}

\section{Numerical results}\label{sec:expes}
In this section we present some numerical experiments to validate the performance
of the proposed preconditioners.\\

We set $\Omega=(0,1)^{2}$, and consider a sequence of subdomain partitions
made of $N=4^{\ell}$ squares, $\ell=1,2,\ldots$, cf. Figure~\ref{fig:subdomains_ini_stru_grids} for $\ell=1,2,3,4$.
For a given subdomain partition, $\ell=1,2,\ldots$, we have tested our preconditioners on a sequence of nested structured and unstructured triangular grids made of $n=2*4^{r}$, $r=\ell,\ell+1,\ldots$.
Notice that the corresponding coarse and fine mesh sizes given by $H \approx 2^{-\ell}$, $\ell=1,2,\ldots$, and  $h \approx 2^{-(r+1/2)}$, $r=\ell,\ell+1,\ldots$, respectively.
In  Figure~\ref{fig:subdomains_ini_stru_grids}
we have reported the initial structured grids, on subdomains partitions made by $N=4^{s}$ squares, $s=1,2,3,4$, are reported. Figure~\ref{fig:subdomains_ini_unstru_grids} shows the first four refinement levels of unstructured grids on a subdomain partition made of $N=4$ squares.\\
\begin{figure}[htbp]
\begin{center}
\subfigure[Initial structured grids.\label{fig:subdomains_ini_stru_grids}]{
\includegraphics[width=0.25\textwidth]{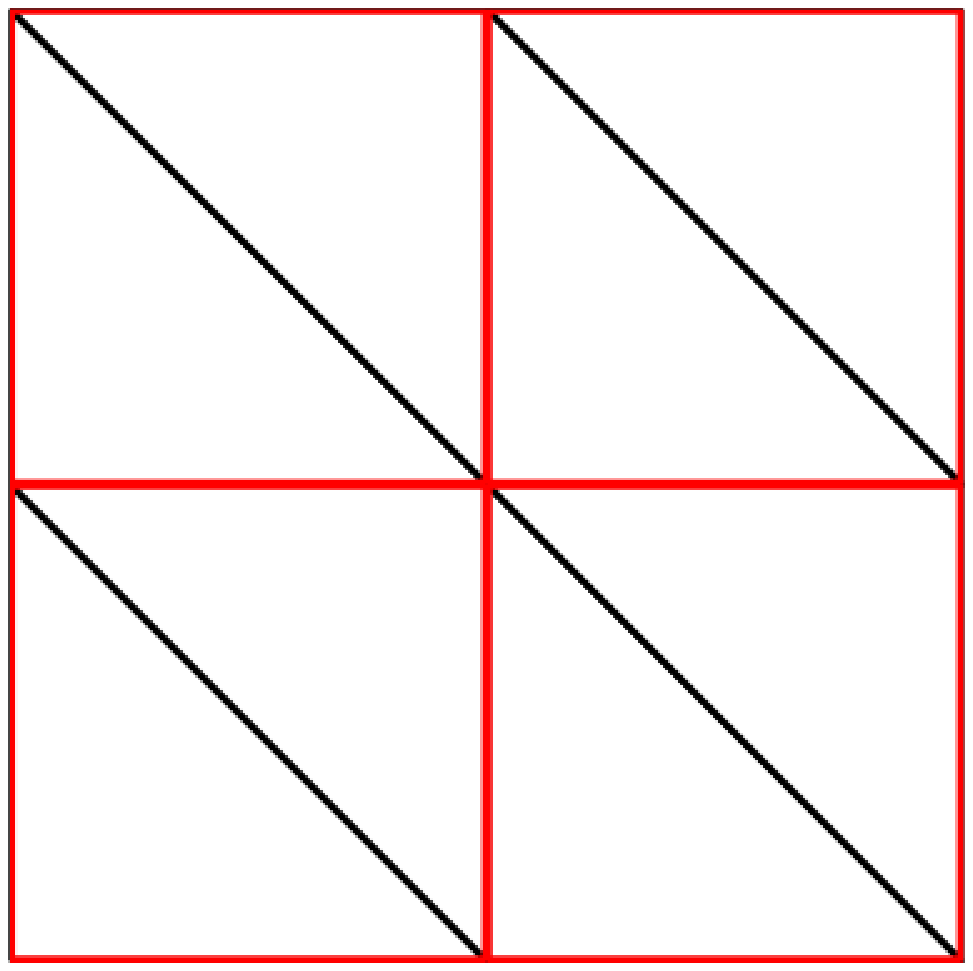}
\includegraphics[width=0.25\textwidth]{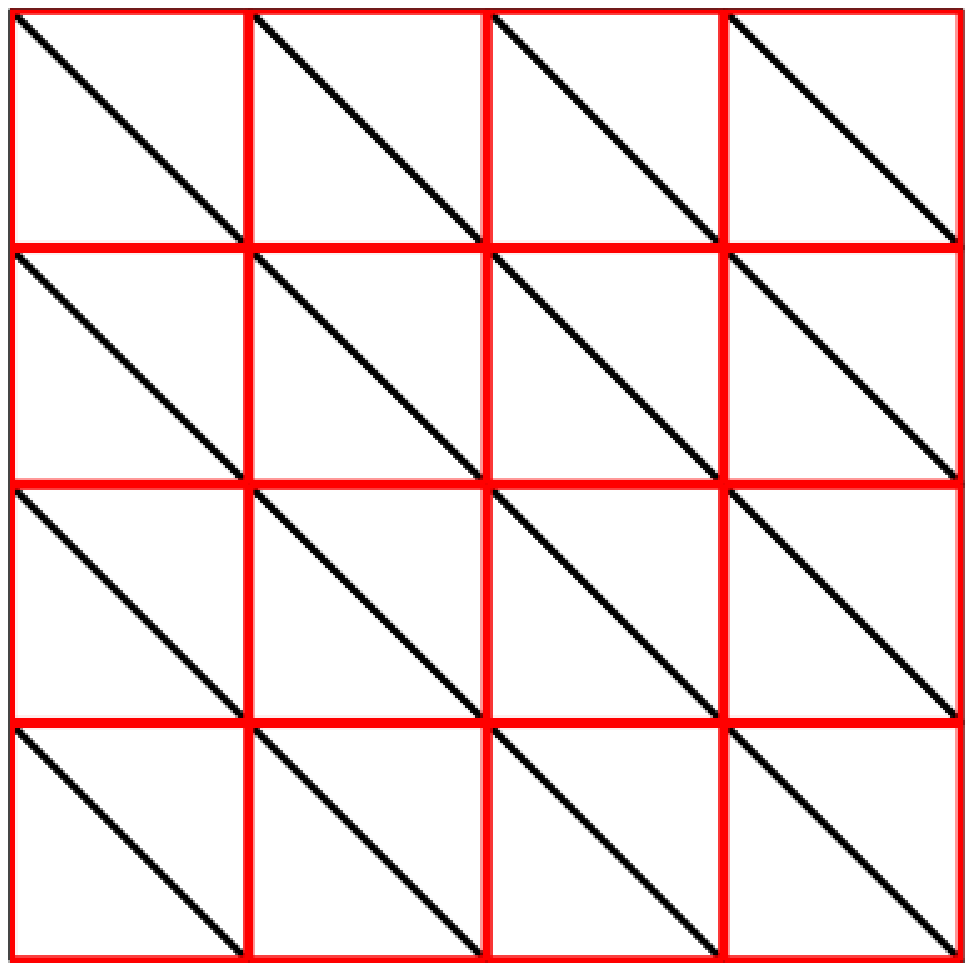}
\includegraphics[width=0.25\textwidth]{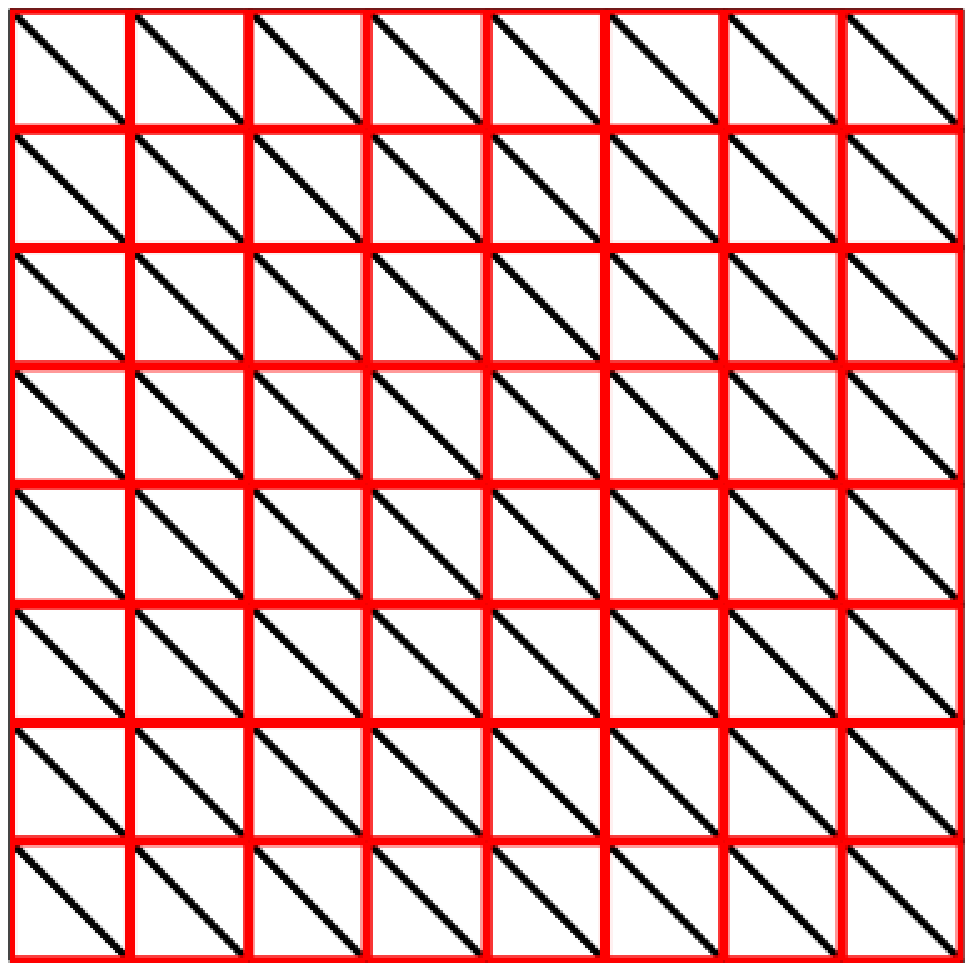}
\includegraphics[width=0.25\textwidth]{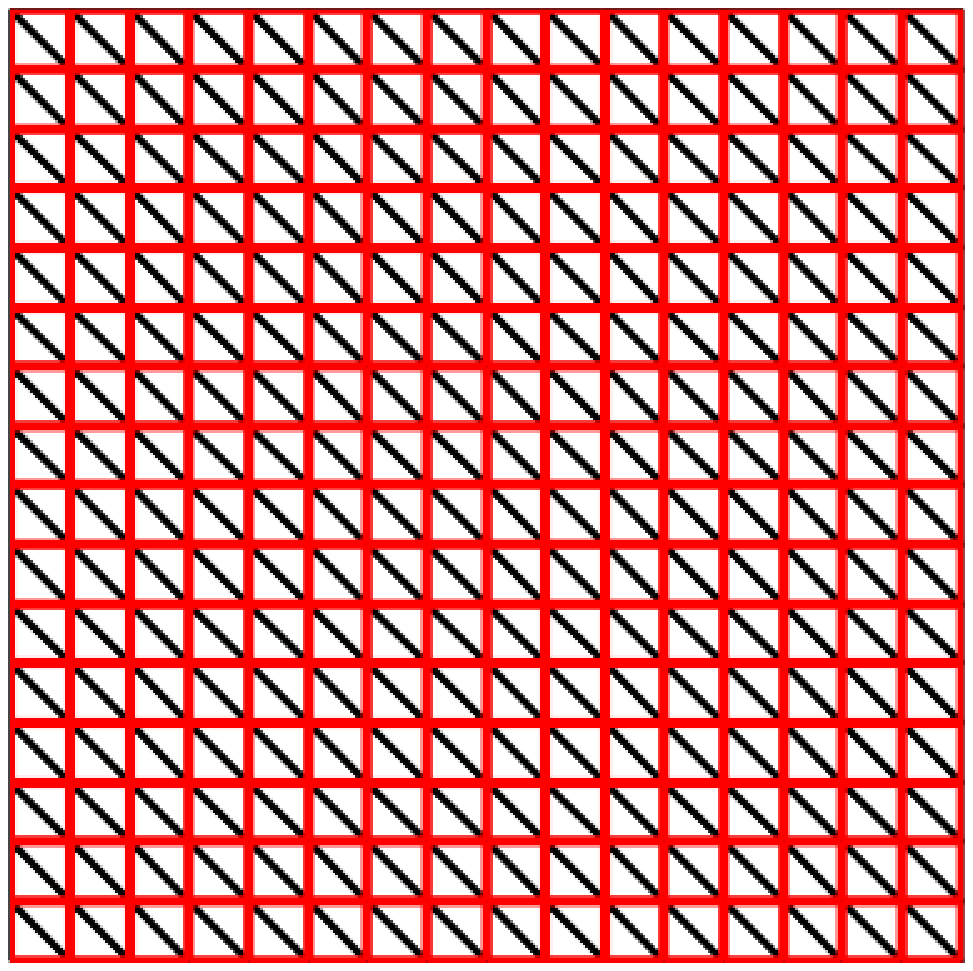}
}
\subfigure[First three refinement levels of unstructured triangular grids on a subdomain partition made of $N=4$ squares.\label{fig:subdomains_ini_unstru_grids}]{
\includegraphics[width=0.25\textwidth]{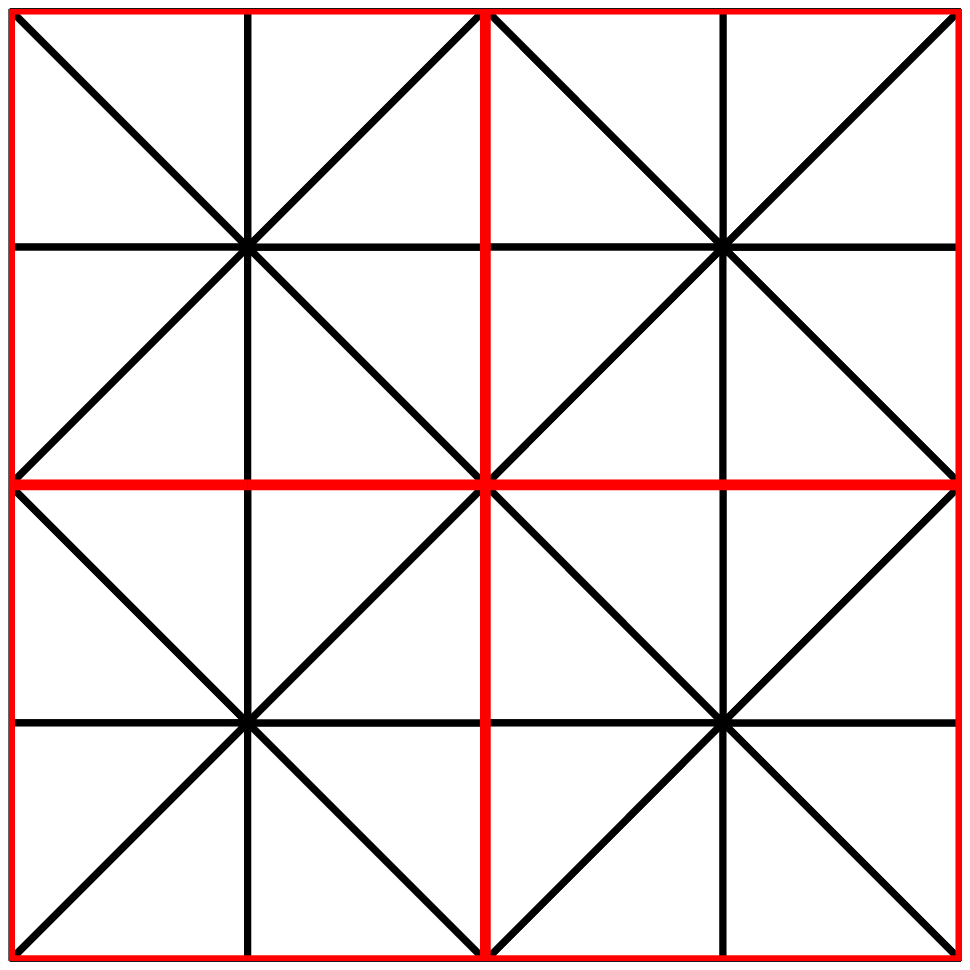}
\includegraphics[width=0.25\textwidth]{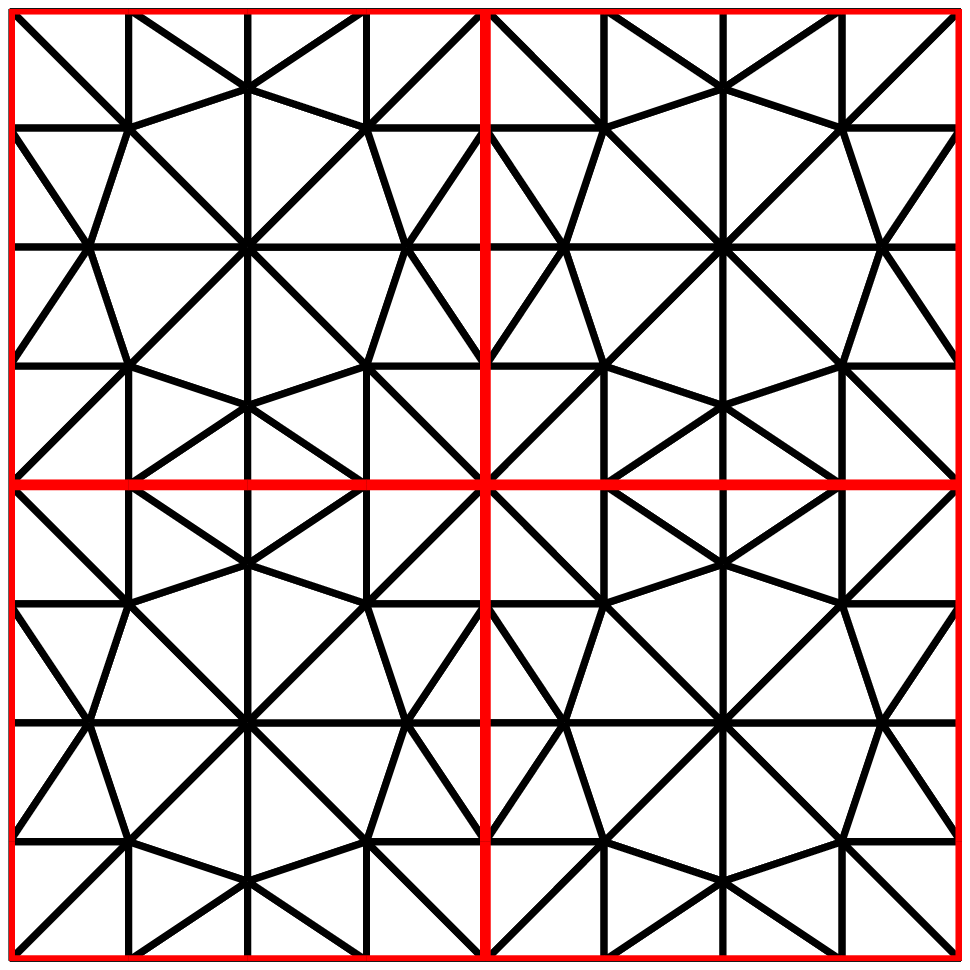}
\includegraphics[width=0.25\textwidth]{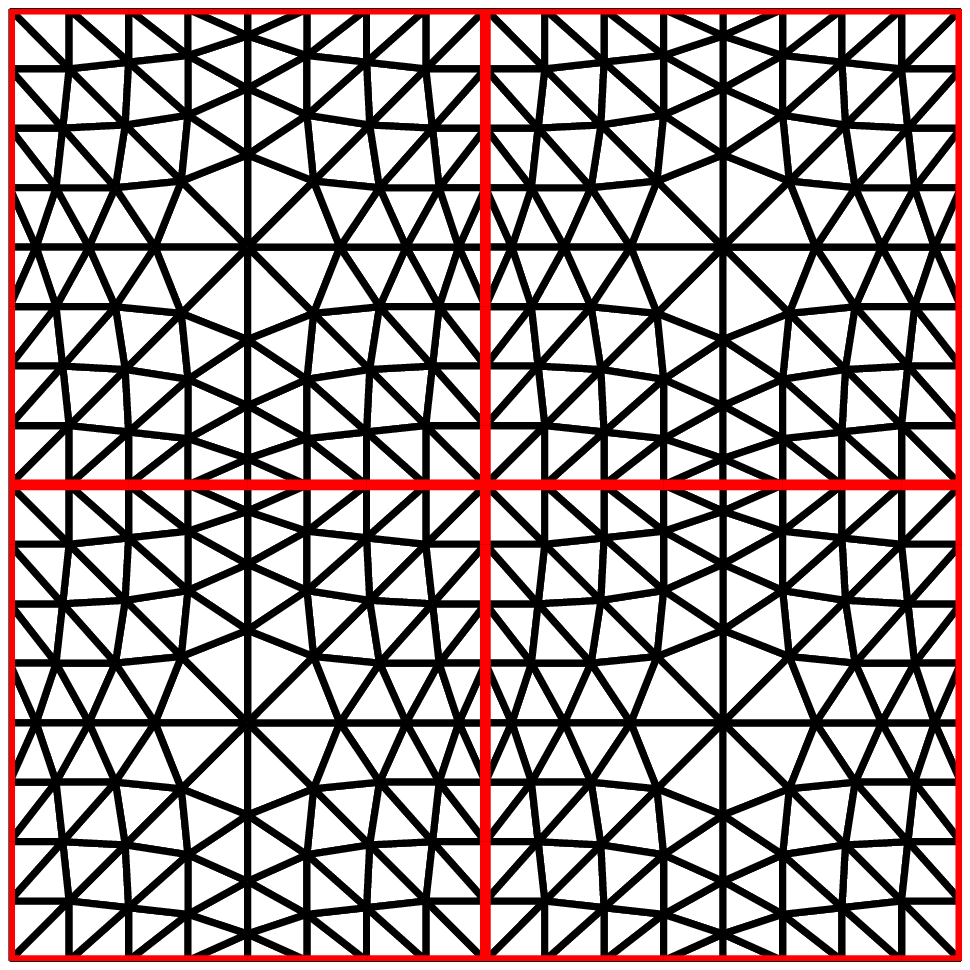}
\includegraphics[width=0.25\textwidth]{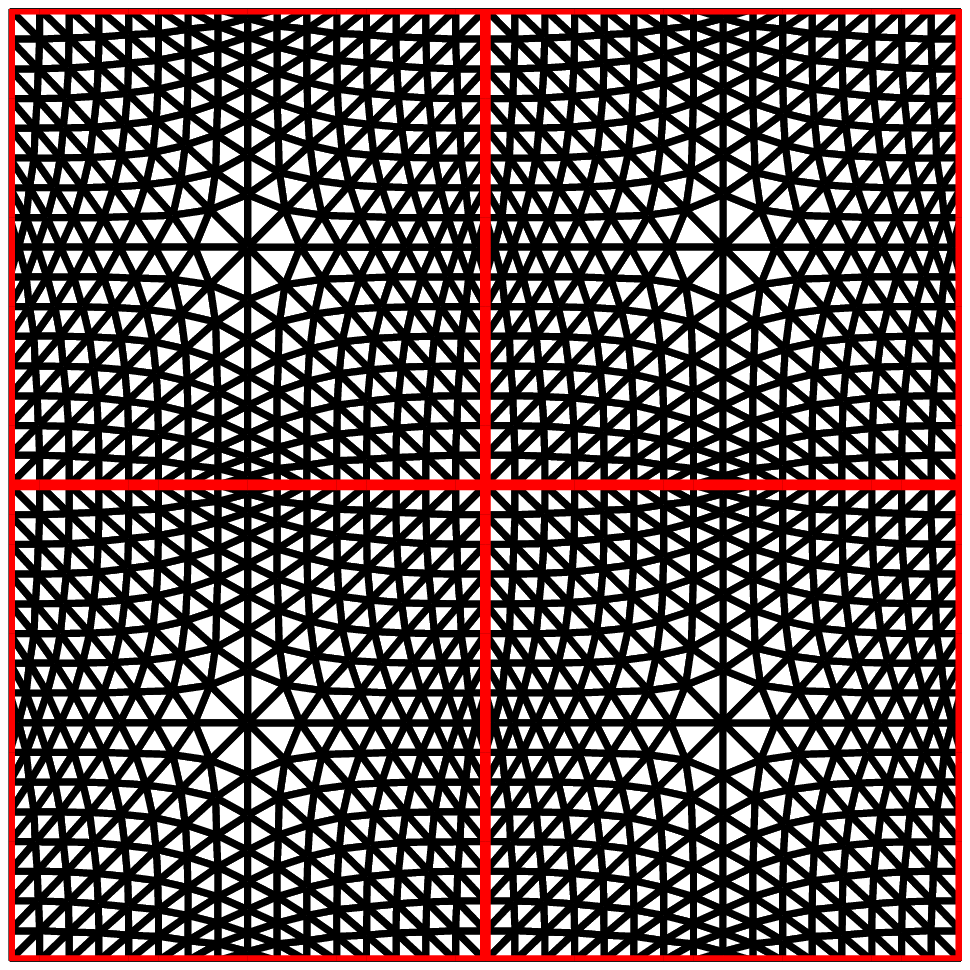}
}
\caption{Top: initial structured grids on subdomains partitions made by $N=4^{\ell}$ squares, $\ell=1,2,3,4$. Bottom: first four refinement levels of unstructured grids on a subdomain partition made of $N=4$ squares.}
\label{fig:subdomains_ini_grids}
\end{center}
\end{figure}

Throughout the section, we have solved the (preconditioned) linear system of equations by the Preconditioned Conjugate Gradient~(PCG) method with a relative tolerance set equal to $10^{-9}$.
The condition number  of the (preconditioned) Schur complement matrix has been estimated within the PCG iteration by exploiting the analogies between the Lanczos technique and the PCG method (see \cite[Sects. 9.3, 10.2]{GolubVanLoan_1996}, for more details). 
Finally, we choose the source term in problem \eqref{prob} as
  $f(x,y)=1$, and set the penalty parameter $\alpha$ equal to $10$.

We first present some computations that show the behavior of the condition number of the  Schur complement matrix $\bS$, cf.  \nref{eq:defS}.
\begin{figure}[!htbp]
\begin{center}
\includegraphics[width=0.48\textwidth]{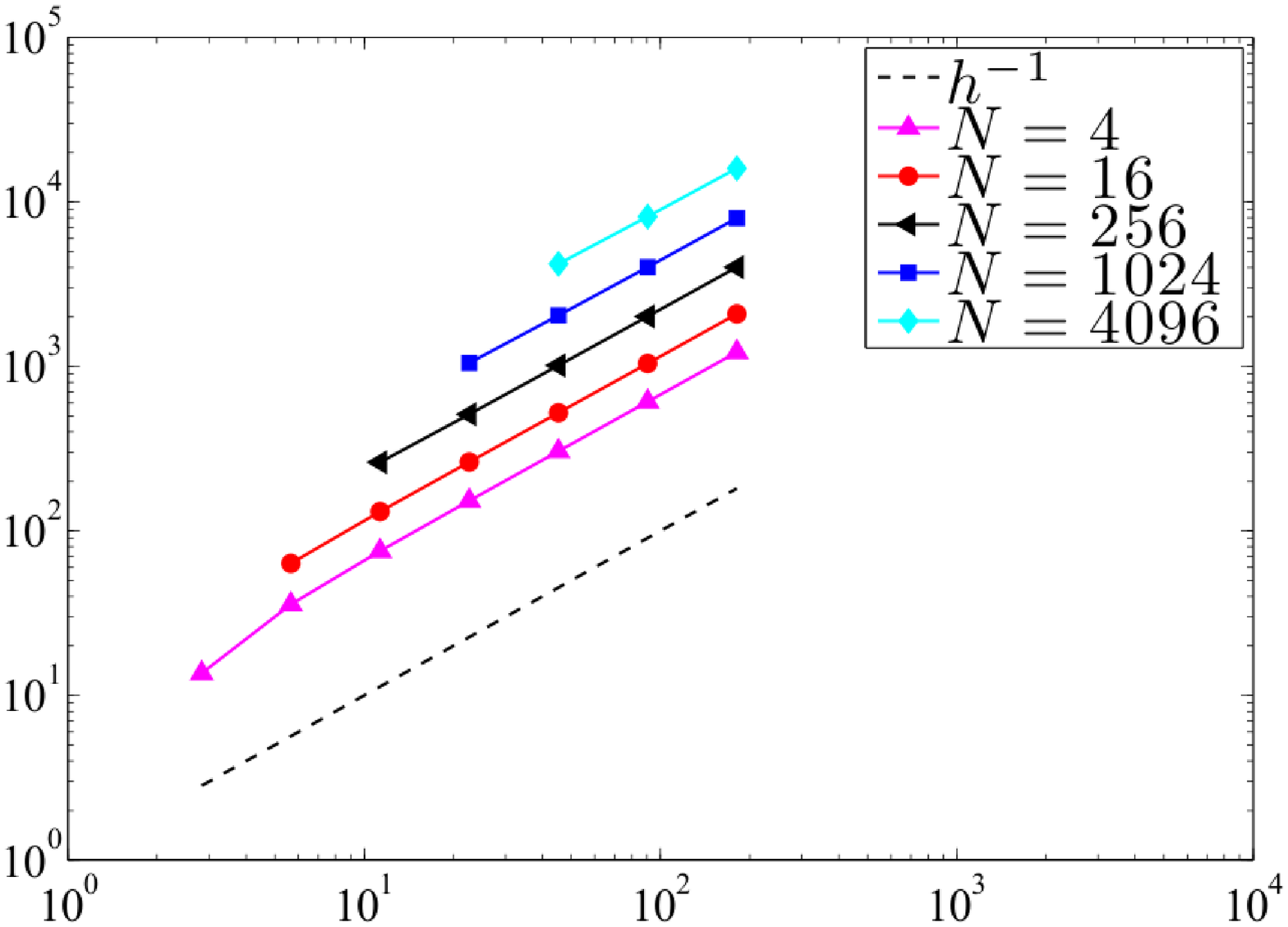}
\includegraphics[width=0.48\textwidth]{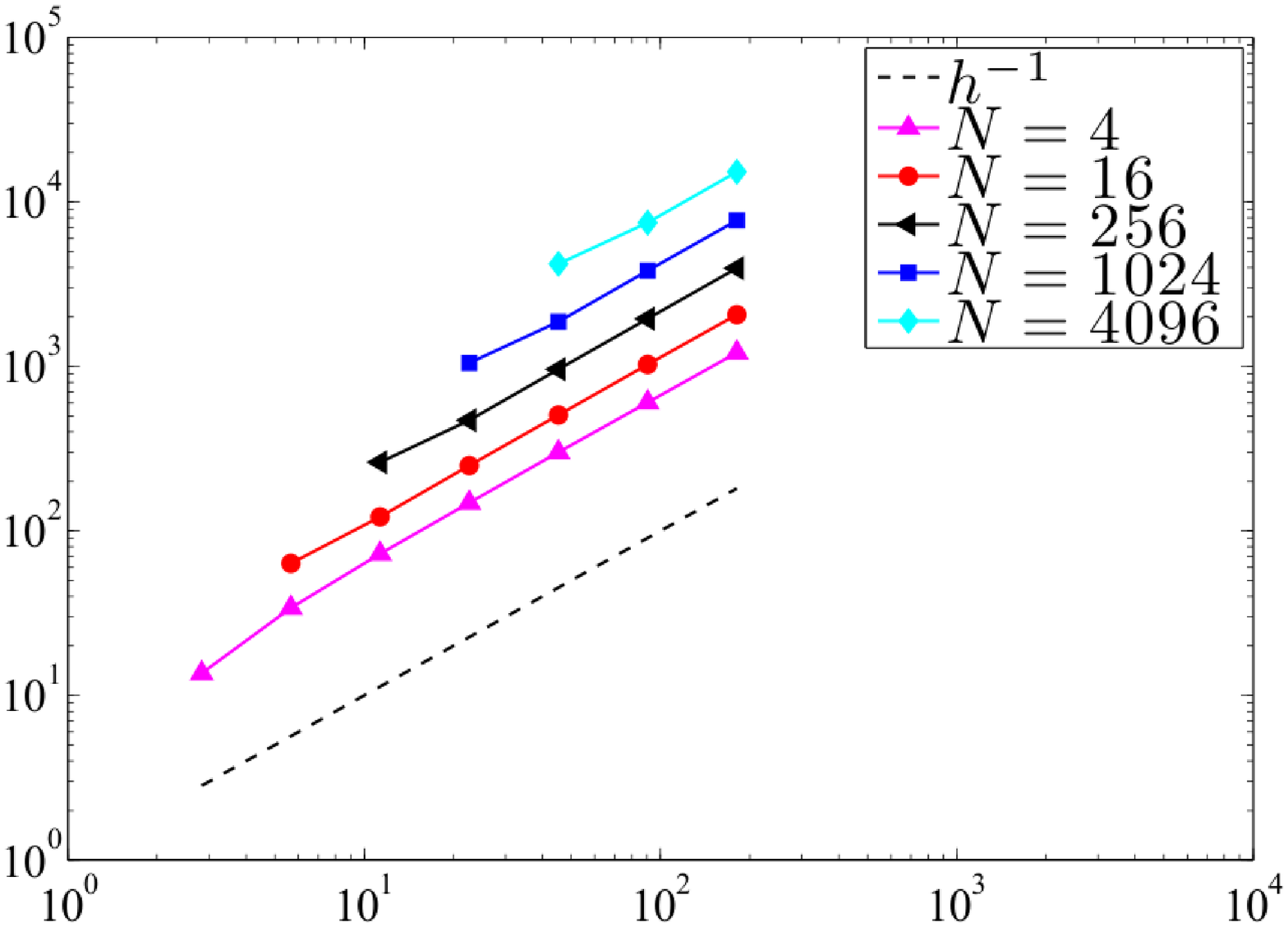}
\caption{Condition number estimate of the Schur complement matrix $\bS$ versus $1/h$ on different subdomains partitions made by $N=4^{\ell}$ squares, $\ell=1,2,3,4,5$. Structured (left) and unstructured (right) triangular grids. piecewise linear elements ($p=1$). }
\label{fig:CondS}
\end{center}
\end{figure}
In Figure~\ref{fig:CondS} (log-log scale) we report,
 for different subdomains partitions made by $N=4^{\ell}$ squares, $\ell=1,2,3,4,5$,  the condition number estimate of the Schur complement matrix $\bS$, $\kappa(\bS)$, as a function of the mesh-size $1/h$.
We clearly observe that $\kappa(\bS)$ increases linearly as the mesh size $h$ goes to zero.\\

Next, we consider the preconditioned linear system of equations
\begin{equation*}
\bP^{-1}\widetilde{\bS} \, \widetilde{\bu}= \bP^{-1} \widetilde{\bg},
\end{equation*}
and test the performance of the preconditioners $\bP$ and $\bP_{\star}$
(cf. \nref{P2} and \nref{eq:defP1}, respectively). Throughout the section, the  action of the preconditioner has been computed with a direct solver. \\

In the first set of experiments, we consider piecewise linear elements
($p=1$), and compute the condition number estimates when varying the number of subdomains and the  mesh size. Table~\ref{tab:precP1P2_hversion_TS} shows the  condition number estimates  increasing the number of subdomains $N$ and the number of elements $n$ of the fine mesh.
In  Table~\ref{tab:precP1P2_hversion_TS}  we also report (between parenthesis) the ratio between the  condition number of the preconditioned system and $\left(1+\log(H/h)\right)^2$ (between parenthesis).
These results have been obtained on a sequence of structured triangular grids as the ones shown in Figure~\ref{fig:subdomains_ini_stru_grids}.  Results reported in Table~\ref{tab:precP1P2_hversion_TS}~(top) refers to the performance of the preconditioner  $\bP$, whereas the analogous results obtained with the preconditioner  $\bP_{\star}$ are shown in Table~\ref{tab:precP1P2_hversion_TS}~(bottom).
\begin{table}[!htbp]
\begin{center}
\begin{tabular}{lrrrrr}
&\multicolumn{5}{c}{Preconditioner $\bP$}\\
\hline$N\downarrow \; n\rightarrow\;$
&\multicolumn{1}{c}{$n=128$}
&\multicolumn{1}{c}{$n=512$}
&\multicolumn{1}{c}{$n=2048$}
&\multicolumn{1}{c}{$n=8192$}
&\multicolumn{1}{c}{$n=32768$}\\
\hline\\[-0.3cm]
$N=   16$   &	3.11 (0.74) &	4.88 (0.65) &	7.50 (0.64) &	10.84 (0.64) &	14.79 (0.64) \\
$N=   64$  	& - 	    &	3.30 (0.79) &	5.25 (0.70) &	 8.00 (0.68) &	11.42 (0.67) \\
$N=  256$  	& -         & -             &   3.35 (0.81) &	 5.36 (0.72) &	 8.16 (0.70) \\
$N= 1024$  	& - 	    & -             & - 	    &     3.37 (0.81) &	 5.39 (0.72) \\
 \hline	\\
&\multicolumn{5}{c}{Preconditioner $\bP_{\star}$}\\
\hline
&\multicolumn{1}{c}{$n=128$}
&\multicolumn{1}{c}{$n=512$}
&\multicolumn{1}{c}{$n=2048$}
&\multicolumn{1}{c}{$n=8192$}
&\multicolumn{1}{c}{$n=32768$}\\
\hline\\[-0.3cm]
$N=   16$  &	2.26 (0.54) &	4.04 (0.54) &	7.01 (0.60) &	11.00 (0.65) &	15.83 (0.68) 	\\
$N=   64$  	& - 	 &	2.42 (0.58) &	4.49 (0.60) &	7.85 (0.67) &	12.28 (0.72) 	\\
$N=  256$  	& - 	 	& - 	 &	2.47 (0.59) &	4.60 (0.62) &	8.07 (0.69) 	\\
$N= 1024$  	& - 	 	& - 	 	& - 	 	&2.48 (0.60) &	4.63 (0.62) 	\\
\hline
\end{tabular}
\caption{Preconditioner $\bP$ (top) and $\bP_{\star}$ (bottom). Condition number estimates and ratio between the  condition number of the preconditioned system and $\left(1+\log(H/h)\right)^2$ (between parenthesis). Structured triangular grids, piecewise linear elements ($p=1$). }%
\label{tab:precP1P2_hversion_TS}%
\end{center}%
\end{table}%
We have repeated the same set of experiments on the sequence of unstructured triangular grids (cf. Figure~\ref{fig:subdomains_ini_unstru_grids}). The computed results are shown  in Figure~\ref{tab:precP1P2_hversion_TU}.
As before, between parenthesis we report
ratio between the  condition number of the preconditioned system and
$\left(1+\log(H/h)\right)^2$. As expected, a logarithmic growth is
clearly observed for both preconditioner $\bP$ and $\bP_{\star}$. \\
%

\begin{table}[!htbp]
\begin{center}
\begin{tabular}{lrrrrr}
&\multicolumn{5}{c}{Preconditioner $\bP$}\\
\hline
&\multicolumn{1}{c}{$n=128$}
&\multicolumn{1}{c}{$n=512$}
&\multicolumn{1}{c}{$n=2048$}
&\multicolumn{1}{c}{$n=8192$}
&\multicolumn{1}{c}{$n=32768$}\\
\hline\\[-0.3cm]
$N=   16$  &	2.87 (0.69) &	4.69 (0.63) &	7.35 (0.63) &	10.68 (0.63) &	14.62 (0.63) 	\\ 
$N=   64$  & - 	 	&3.05 (0.73) &	5.01 (0.67) &	7.75 (0.66) &	11.13 (0.66) 	\\ 
$N=  256$ & - 	 	& - 	 	&3.09 (0.74) &	5.08 (0.68) &	7.89 (0.67) 	\\ 
$N= 1024$ & - 	 	& - 	 	& - 	 	&3.11 (0.75) &	5.11 (0.68) 	\\ 
 \hline	\\
&\multicolumn{5}{c}{Preconditioner $\bP_{\star}$}\\
\hline
&\multicolumn{1}{c}{$n=128$}
&\multicolumn{1}{c}{$n=512$}
&\multicolumn{1}{c}{$n=2048$}
&\multicolumn{1}{c}{$n=8192$}
&\multicolumn{1}{c}{$n=32768$}\\
\hline\\[-0.3cm]
$N=   16$  &	1.84 (0.44) &	3.24 (0.43) &	5.51 (0.47) &	8.44 (0.50) &	12.00 (0.52) 	\\
$N=   64$  &	 - 	 	& 2.01 (0.48) &	3.77 (0.50) &	6.35 (0.54) &	9.76 (0.58) 	\\
$N=  256$  	& - 	 	& - 	 	& 2.04 (0.49) &	3.90 (0.52) &	6.58 (0.56) 	\\
$N= 1024$  	& - 	 	& - 	 	& - 	 	&2.05 (0.49) &	3.93 (0.53)	\\
\hline
\end{tabular}
\caption{Preconditioner $\bP$ (top) and $\bP_{\star}$ (bottom). Condition number estimates and ratio between the  condition number of the preconditioned system and $\left(1+\log(H/h)\right)^2$ (between parenthesis). Unstructured triangular grids, piecewise linear elements ($p=1$). }%
\label{tab:precP1P2_hversion_TU}%
\end{center}%
\end{table}%

Next, always with $p=1$, we present some computations that show that the preconditioner $\bP_{D}$ defined as in \nref{eq:defP1diag}, i.e.,  the block-diagonal version of the preconditioner $\bP_{\star}$, is not optimal (cf. Remark \ref{rem:defP1diag}). More precisely, in Table~\ref{tab:precPdaisy_hversion_TS_TU} we report the condition number estimate of the preconditioned system when decreasing $H$ as well as $h$. Table~\ref{tab:precPdaisy_hversion_TS_TU} also shows (between parenthesis) the ratio between $\kappa(\bP_{D} \widetilde{\bS})$ and $Hh^{-1}$.
We can clearly observe that on both structured and unstructured mesh configurations, the ratio between $\kappa(\bP_{D} \widetilde{\bS})$ and $Hh^{-1}$
remains substantially constant as $H$ and $h$ vary, indicating that
the preconditioner $\bP_{D}$ is not optimal. \\

\begin{table}[!htbp]
\begin{center}
\begin{tabular}{lrrrrr}
&\multicolumn{5}{c}{Structured triangular grids}\\
\hline
&\multicolumn{1}{c}{$n=128$}
&\multicolumn{1}{c}{$n=512$}
&\multicolumn{1}{c}{$n=2048$}
&\multicolumn{1}{c}{$n=8192$}
&\multicolumn{1}{c}{$n=32768$}\\
\hline\\[-0.3cm]
$N=   16$ &	11.51 (4.07) & 23.19 (4.10) & 47.40 (4.19) & 95.21 (4.21) & 190.69 (4.21) \\
$N=   64$ & - 	             & 11.58 (4.09) & 23.03 (4.07) & 47.16 (4.17) & 95.02 (4.20) \\
$N=  256$ & - 	             & - 	    & 11.55 (4.08) & 22.96 (4.06) & 47.12 (4.16) \\
$N= 1024$ & - 	             & - 	    & - 	   & 11.44 (4.04) & 22.88 (4.04) \\
\hline\\
&\multicolumn{5}{c}{Unstructured triangular grids }\\
\hline
&\multicolumn{1}{c}{$n=128$}
&\multicolumn{1}{c}{$n=512$}
&\multicolumn{1}{c}{$n=2048$}
&\multicolumn{1}{c}{$n=8192$}
&\multicolumn{1}{c}{$n=32768$}\\
\hline\\[-0.3cm]
$N=   16$  &	9.45 (3.34) &	18.63 (3.29) &	39.13 (3.46) &	75.38 (3.33) &	148.93 (3.29) 	\\
$N=   64$  	& - 	 	& 8.93 (3.16) &	18.30 (3.24) &	38.88 (3.44) &	78.82 (3.48) 	\\
$N=  256$  	& - 	 	& - 	 	& 8.80 (3.11) &	17.85 (3.15) &	38.59 (3.41) 	\\
$N= 1024$  	& - 	 	& - 	 	& - 	 &	8.75 (3.10) &	17.64 (3.12) 	\\
\hline\\
\end{tabular}
\caption{Preconditioner $\bP_{D}$. Condition number estimates and ratio between $\kappa(\bP_{D} \widetilde{\bS})$ and $Hh^{-1}$ (between parenthesis). Structured (top) and unstructured (bottom) triangular grids, piecewise linear elements ($p=1$).}%
\label{tab:precPdaisy_hversion_TS_TU}%
\end{center}%
\end{table}%


Finally, we present some computations obtained with high-order
elements. As before, we consider a subdomain partition made of
$N=4^{\ell}$ squares, $\ell=1,2,\ldots$,
(cf. Figure~\ref{fig:subdomains_ini_stru_grids} for $\ell=1,2,3$).
 In this set of experiments, the subdomain partition coincides with
 the fine grid, i.e., $H=h$ , and on each element we consider the
 space of polynomials of degree $p=2,3,4,5,6$ in each  coordinate
 direction.

Table~\ref{tab:nonprec_pversion_CART} shows the condition number estimate of the non-preconditioned Schur complement matrix and the CG iteration counts.
\begin{table}[!htbp]
\begin{center}
\begin{tabular}{llllll}
\hline
$N=n$
&\multicolumn{1}{c}{$p=2$}
&\multicolumn{1}{c}{$p=3$}
&\multicolumn{1}{c}{$p=4$}
&\multicolumn{1}{c}{$p=5$}
&\multicolumn{1}{c}{$p=6$}\\
\hline\\[-0.3cm]
$4$   & 5.1e+1 ( 5) 	& 2.7e+2 ( 8) 	& 6.2e+2 (13) 	& 1.4e+3 (18) 	& 3.4e+3 (28) 	\\ 
$16$   & 3.2e+2 (22) 	& 8.4e+2 (42) 	& 2.0e+3 (69) 	& 4.6e+3 (101) 	& 1.1e+4 (153) 	\\ 
$64$   & 1.2e+3 (90) 	& 3.2e+3 (150) 	& 7.6e+3 (231) 	& 1.8e+4 (312) 	& 4.3e+4 (446) 	\\ 
$256$   &  4.7e+3 (195) 	& 1.3e+4 (294) 	& 3.0e+4 (462) 	& 7.0e+4 (634) 	& 1.7e+5 (886) 	\\ 
\hline
\end{tabular}
\caption{Condition
  number estimates $\kappa(\bS)$ and CG iteration counts (between
  parenthesis).  Cartesian grids.
}%
\label{tab:nonprec_pversion_CART}%
\end{center}%
\end{table}%
We have run the same  set of experiments employing the  preconditioners $\bP$ and $\bP_{\star}$, and the results are reported  in Table~\ref{tab:precP1P3_pversion_CART} .
We clearly observe that, as predicted, for a fixed mesh configuration the condition number of the preconditioned system grows logarithmically as the polynomial approximation degree increases.
Comparing these results with the analogous ones reported in  Table~\ref{tab:nonprec_pversion_CART}, it can be inferred that both the preconditioners $\bP$ and $\bP_{\star}$ are efficient in reducing the condition number of the Schur complement matrix. 
\begin{table}[!htbp]
\begin{center}
\begin{tabular}{lrrrrr}
&\multicolumn{5}{c}{Preconditioner $\bP$}\\
\hline
$N=n$
&\multicolumn{1}{c}{$p=2$}
&\multicolumn{1}{c}{$p=3$}
&\multicolumn{1}{c}{$p=4$}
&\multicolumn{1}{c}{$p=5$}
&\multicolumn{1}{c}{$p=6$}\\
\hline\\[-0.3cm]
$N=    4$ &	7.14 (1.25) &	9.04 (0.88) &	12.06 (0.85) &	14.15 (0.79) &	16.48 (0.78) 	\\ 
$N=   16$ &	9.24 (1.62) &	9.93 (0.97) &	15.25 (1.07) &	15.99 (0.90) &	20.25 (0.96) 	\\ 
$N=   64$ &	10.03 (1.76) &	10.14 (0.99) &	16.34 (1.15) &	16.57 (0.93) &	21.53 (1.02) 	\\ 
$N=  256$ &	10.24 (1.80) &	10.19 (1.00) &	16.61 (1.17) &	16.71 (0.94) &	21.84 (1.04) 	\\ 
\hline\\
&\multicolumn{5}{c}{Preconditioner $\bP_{\star}$}\\
\hline
$N=n$
&\multicolumn{1}{c}{$p=2$}
&\multicolumn{1}{c}{$p=3$}
&\multicolumn{1}{c}{$p=4$}
&\multicolumn{1}{c}{$p=5$}
&\multicolumn{1}{c}{$p=6$}\\
\hline\\[-0.3cm]
$N=    4$ &	1.88 (0.33) &	2.56 (0.25) &	3.75 (0.26) &	4.64 (0.26) &	5.70 (0.27) 	\\ 
$N=   16$ &	4.60 (0.81) &	5.23 (0.51) &	8.71 (0.61) &	9.38 (0.53) &	12.25 (0.58) 	\\ 
$N=   64$ &	6.18 (1.09) &	6.03 (0.59) &	10.35 (0.73) &	10.79 (0.61) &	14.33 (0.68) 	\\ 
$N=  256$ &	6.55 (1.15) &	6.25 (0.61) &	10.83 (0.76) &	11.20 (0.63) &	14.94 (0.71) 	\\ 
\hline
\\
\end{tabular}
\caption{Preconditioner $\bP$ (top), $\bP_{\star}$ (bottom).  Condition number estimates and ratio between the  condition number of 
the preconditioned system and $\left(1+\log(p^{2})\right)^2$ (between
parenthesis). Cartesian grids.} %
\label{tab:precP1P3_pversion_CART}%
\end{center}%
\end{table}%

\appendix
\section{Appendix}\label{app0}
In this section, we report the proofs of Lemma~\ref{lembsp2} and Lemma~\ref{lembsp}.

\

 In the following, for $E\subset \Omega_\ell$ subdomain edge we will
 make explicit use of the space $H^{s}_0(E)$, $0<s<1/2$, which is defined as the subspace of
 those functions $\eta$ of $H^s(E)$ such that
 the function $\bar \eta \in L^2(\partial\omk)$ defined as $\bar \eta = \eta$ on $E$ and $\bar \eta = 0$ on $\partial\omk\setminus E$
 belongs to $H^s(\partial\omk)$.
 The space  $H^{s}_0(E)$ is endowed with the norms
 \[
 \| \eta \|_{H^s_0(E)} = \| \bar \eta \|_{H^s(\partial\omk)}.\]
 We recall that for $s < 1/2$ the spaces $H^s(E)$ and
 $H^{s}_0(E)$ coincide as sets and have equivalent norms. However, the
 constant in the norm equivalence goes to infinity as $s$ tends to $1/2$.
 In particular on the reference segment $\widehat E =(0,1)$, for all $\varphi \in
H^{s}(\widehat{E})$ and for all $\beta \in \RR$, the following bound can be shown (see \cite{Bsubstr})
\begin{equation*}
| \varphi |_{H^{s}_0(\widehat{E})}
\lesssim  \frac 1 {1/2-s} \| \varphi - \beta \|_{H^{s}(\widehat{E})}
+ \frac 1 {\sqrt{1/2-s}}
|\beta|,
\end{equation*}
 which, provided $\varphi \in H^{1/2}(\widehat E)$, implies the bound
 \begin{equation}
| \varphi |_{H^{s}_0(\widehat{E})}\lesssim
\frac 1 {1/2-s} \| \varphi - \beta \|_{H^{1/2}(\widehat{E})}
+ \frac 1 {\sqrt{1/2-s}}
|\beta|.\label{penultima}
\end{equation}

Prior to give the proofs of Lemmas \ref{lembsp2} and \ref{lembsp}, we start by observing that the following result,  that corresponds to the $hp$-version of  \cite[Lemma~3.1]{Bsubstr}, holds.
\begin{lemma} \label{lemmainf}
Let $E=(a,b)$ be a subdomain edge of  $\omk$.
Then, for all $\eta \in\Phi_{\ell}(E)$, the
following bounds hold:
\begin{itemize}
\item[{\em (i)}]
\begin{equation}
\label{eq:19}
(\eta(a) - \eta(b))^2 \lesssim \left(1+\log{\left(\frac{\Ho\,\po^{2}}{\ho}\right)}\right) | \eta |^2_{H^{1/2}(E)}.
\end{equation}
\item[{\em (ii)}]  If  $\eta(x)=0$ at some
$x \in E$ it holds
\begin{equation}\label{eq:18}
\| \eta \|^2_{L^\infty(E)} \lesssim \left(1+\log{\left(\frac{\Ho\,\po^{2}}{\ho}\right)}\right) | \eta |^2_{H^{1/2}(E)}.
\end{equation}
\item[{\em (iii)}] if $\eta \in\Phi^{0}_{\ell}(E)$, we have
\begin{equation}
\label{eq:4}
\| \eta \|_{H^{1/2}_{00}(E)}^2\lesssim \left(1+\log{\left(\frac{\Ho\,\po^{2}}{\ho}\right)}\right)  |
\eta |_{H^{1/2}(\partial \omk)}^2.
\end{equation}
\end{itemize}
\end{lemma}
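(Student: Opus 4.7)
The plan is to pass to the reference segment $\widehat E=(0,1)$ by scaling and to prove all three bounds via a common mechanism: combine an interpolation between $H^{1/2}$ and a slightly larger fractional Sobolev space with the inverse inequality \eqref{inv:E}. The logarithmic factor will arise from optimizing a small parameter $\varepsilon$ that balances the blow-up of the interpolation/embedding constant against the factor $(\po^{2}/\ho)^{\varepsilon}$ produced by the inverse inequality.

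For part (i), the difference $\eta(a)-\eta(b)$ is invariant under the addition of a constant, so I would first assume (after translation) that $\eta(a)=0$, in which case $|\eta(a)-\eta(b)|^{2}\le\|\eta\|_{L^{\infty}(\widehat E)}^{2}$. On $\widehat E$ the Sobolev embedding $H^{s}(\widehat E)\hookrightarrow C^{0}(\widehat E)$ for $s>1/2$ holds with constant blowing up as $(s-1/2)^{-1/2}$, and the condition $\eta(a)=0$ provides a Poincar\'e-type inequality controlling $\|\eta\|_{L^{2}}$ by the seminorm (with a factor independent of $s$ near $1/2$). Combining these with the inverse inequality $|\eta|_{H^{s}(\widehat E)}\lesssim(\po^{2}/\widehat h)^{s-1/2}|\eta|_{H^{1/2}(\widehat E)}$ and then choosing $s-1/2=1/\log(\Ho\po^{2}/\ho)$ yields the stated bound \eqref{eq:19}. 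Part (ii) is handled by exactly the same argument, with the hypothesis $\eta(x)=0$ supplying the Poincar\'e anchor in place of $\eta(a)=0$.

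For part (iii) I would apply the bound \eqref{penultima} with $\varphi=\eta$ and an appropriate choice of $\beta$ (since $\eta\in\Phi^{0}_{\ell}(E)$ vanishes at the endpoints, one can take $\beta=0$ or a suitable mean value to control the full $H^{1/2}$ norm on the right), obtaining $|\eta|_{H^{s}_{0}(\widehat E)}\lesssim(1/2-s)^{-1}\|\eta\|_{H^{1/2}(\widehat E)}$ for all $s<1/2$. Using the definition $\|\eta\|_{H^{s}_{0}(E)}=\|\bar\eta\|_{H^{s}(\partial\omk)}$ via the extension by zero (which is continuous at the endpoints of $E$ precisely because $\eta$ vanishes there), together with the inverse inequality \eqref{inv:O} on $\partial\omk$, one passes from $H^{s}_{0}(E)$ up to $H^{1/2}_{00}(E)$ at the cost of a factor $(\po^{2}/\ho)^{1/2-s}$. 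Balancing $1/2-s\sim 1/\log(\Ho\po^{2}/\ho)$ yields the claimed logarithmic factor, after bounding $\|\eta\|_{H^{1/2}(\widehat E)}$ above by $|\eta|_{H^{1/2}(\partial\omk)}$ through standard Poincar\'e/extension estimates using that $\eta$ vanishes at $\partial E$.

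The main obstacle throughout is to make explicit the $s$-dependence of all the constants---in particular the Sobolev embedding constant as $s\to 1/2^{+}$ (for parts (i)--(ii)) and the norm equivalence $H^{s}\simeq H^{s}_{0}$ as $s\to 1/2^{-}$ (for part (iii), already encoded in \eqref{penultima})---since these dictate the precise power of the logarithm that appears. Once this quantitative bookkeeping is done, the remainder is a mechanical one-parameter optimization and the $hp$-generalization of the classical BPS argument goes through without new ideas.
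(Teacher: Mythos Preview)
Your overall strategy---Sobolev embedding $H^{1/2+\varepsilon}\hookrightarrow L^\infty$ combined with the inverse inequality \eqref{inv:E}--\eqref{inv:O} and an optimization in $\varepsilon$---is exactly the mechanism the paper uses. However, there is a genuine gap in your handling of the $L^2$ term in parts (i)--(ii). You assert that the condition $\eta(a)=0$ (resp.\ $\eta(x)=0$) yields a Poincar\'e inequality $\|\eta\|_{L^2}\lesssim|\eta|_{H^s}$ with constant \emph{independent of $s$ near $1/2$}. This is false: the optimal constant behaves like $(s-1/2)^{-1/2}$. (Heuristically: point evaluation has norm $\sim(s-1/2)^{-1/2}$ on $H^s$; taking $f=c-\lambda g$ with $g$ the Riesz representer of $\delta_0$ and $\lambda$ chosen so that $f(0)=0$ produces a function with $\|f\|_{L^2}\sim c$ but $|f|_{H^s}\sim c\,(s-1/2)^{1/2}$.) Plugging this blow-up into your argument gives $\|\eta\|_{L^\infty}^2\lesssim \varepsilon^{-2}(\po^2/\ho)^{2\varepsilon}|\eta|_{H^{1/2}}^2$, and after optimizing you end up with $(1+\log)^2$ instead of the claimed $(1+\log)$.

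The paper sidesteps this by subtracting the \emph{mean} $\beta=|E|^{-1}\int_E\eta$ rather than a point value: Poincar\'e--Friedrichs gives $H_\ell^{-1/2}\|\eta-\beta\|_{L^2(E)}\lesssim|\eta|_{H^{1/2}(E)}$ with a constant entirely independent of $s$, so only the single factor $\varepsilon^{-1}$ from the embedding survives and one obtains $\|\eta-\beta\|_{L^\infty}^2\lesssim(1+\log)|\eta|_{H^{1/2}}^2$. The vanishing hypothesis enters only at the very end, through the elementary bound $|\beta|=|\beta-\eta(x_0)|\le\|\eta-\beta\|_{L^\infty}$, which upgrades $\|\eta-\beta\|_{L^\infty}$ to $\|\eta\|_{L^\infty}$ at no cost. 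Your plan for part (iii) is essentially the paper's; but the same issue recurs if you take $\beta=0$ in \eqref{penultima}, since you must then control the full norm $\|\eta\|_{H^{1/2}(E)}$ by the seminorm. The paper again uses the mean for $\beta$ and invokes the $L^\infty$ estimate just established to handle the $|\beta|/\sqrt{\varepsilon}$ term.
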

%

\begin{proof}
We first show {\em (ii)}. Notice that since $E$ is an arbitrary subdomain edge, $E\subset \partial\omk$, and so $|E|\simeq H_{\ell}$.  We claim that for any $\varphi \in H^{1/ 2+\varepsilon}(E)$ the following inequality holds:
 \begin{equation}
   \label{eq:2}
   \| \varphi  \|^2_{L^{\infty}(E)} \lesssim H^{-1}_{\ell} \| \varphi  \|_{L^2(E)}^2 + \frac {H^{2\varepsilon}_{\ell}} {\varepsilon} | \varphi |^2_{H^{1/ 2+\varepsilon}(E)}.
 \end{equation}
 To show \eqref{eq:2} one needs to trace the constants in the Sobolev imbedding between $H^{1/2+\varepsilon}(E)$ and $L^{\infty}(E)$.
 Let $\widehat{E} =]0,1[$ be the reference unit segment. Then, for any $\hat{\varphi} \in H^{1/ 2+\varepsilon}(\widehat{E})$, the continuity constant of the injection $H^{1/2+\varepsilon}(\widehat{E}) \subset
L^{\infty}(\widehat{E})$  depends on $\varepsilon$ as follows (see \cite[Appendix]{Bsubstr}, for details)
\[
   \|\hat{ \varphi} \|^2_{L^{\infty}(\widehat{E})} \lesssim  \|\hat{ \varphi} \|_{L^2(\widehat{E})}^2 +
  \frac 1 {\varepsilon} | \hat{\varphi} |^2_{H^{1/ 2+\varepsilon}(\widehat{E})}.
\]
A scaling argument using $|E|\simeq H_{\ell}$ leads to \eqref{eq:2}.

Let now $\eta \in \Phi_h$ and $\beta \in \mathbb{R}$ an arbitrary
constant.
Using the inverse inequality \eqref{inv:E}, we have
 \begin{equation}
   \label{eq:3}
   \begin{aligned}
   \frac  {\Ho^{2\varepsilon}} {\varepsilon} | \eta - \beta |^2_{H^{1/ 2+\varepsilon}(E)} &=
\frac  {\Ho^{2\varepsilon}} {\varepsilon} | \eta |^2_{H^{1/ 2+\varepsilon}(E)}
\lesssim \frac {\Ho^{2\varepsilon}\, \po^{4\varepsilon}\ho^{-2\varepsilon}}{\varepsilon} | \eta |^2_{H^{1/ 2}(E)} &&\\
& \lesssim \log{\left(\frac{\Ho \,\po^{2}}{\ho}\right)} | \eta |^2_{H^{1/ 2}(E)}\;, &&
\end{aligned}
 \end{equation}
where in the last step we have taken $\varepsilon = 1/\log(H_{\ell}\po^{2}/\ho)$ and used the fact that $s^{1/\log(s)}=e$. Applying now
 inequality
\eqref{eq:2} to $\varphi=\eta - \beta$ together with the above estimate \eqref{eq:3} yields:
\begin{equation}
  \label{eq:27}
  \| \eta - \beta \|_{L^\infty(E)}^2 \lesssim H_{\ell}^{-1} \| \eta- \beta \|^2_{L^2(E)} +
  \log \left( \frac {H_{\ell}\po^2} {\ho } \right)| \eta |_{H^{1/2}(E)}^2.
\end{equation}
Following \cite{BPS}, let  $\beta$ be the average over $E$ of $\eta$ (or the $L^{2}$- projection onto the space $\mathbb{P}^{0}(E))$ of constants functions over $E$. Poincar\`e-Friederichs inequality (or standard approximation results) give
 \begin{equation}
   \label{eq:20}
H_{\ell}^{-1/2} \| \eta - \beta \|_{L^2(E)} \lesssim | \eta |_{H^{1/2}(E)}
 \end{equation}
which yields
 \begin{equation}
   \label{eq:21}
   \| \eta - \beta \|^2_{L^\infty(E)} \lesssim \left( 1 + \log \left( \frac {H_{\ell}\po^{2}} {\ho}\right) \right)| \eta |^2_{H^{1/2}(E)}.
 \end{equation}
 The proof of \eqref{eq:18} is concluded by noticing that if  $\eta(x) = 0$ for some $x \in \bar{E}$ then it follows
\begin{equation}
  \label{eq:21b}
    | \beta | \lesssim \| \eta - \beta \|_{L^\infty(E)}
\end{equation}
 which yields \eqref{eq:18} using triangular inequality.

The proof of \eqref{eq:19} follows by applying the estimate \eqref{eq:18} to the function $\eta-\eta(a)$, which by hypothesis vanishes at $a\in \bar{E}$.

 To show {\em (iii)}, we first notice that for $\etao \in
 \Phi^{o}_{\ell}(E)$, we can always construct an extension
 $\widetilde{\eta}_o$ such that
 \begin{equation*}
 \begin{aligned}
&\widetilde{\eta}_o =\etao
\; \textrm{ on  $E$}
&& \quad \widetilde{\eta}_o=0
\; \textrm{ on  $\partial\omk \setminus E$}\;.
\end{aligned}
\end{equation*}
 Using now
 the inverse
inequality  \eqref{inv:O},  we obtain the following bounds
\begin{equation}
\| \etao \|_{H^{1/2}_{00}(E)}  \lesssim | \widetilde{\eta}_o |_{H^{1/2}(\partial\omk) }
\lesssim \po^{2\varepsilon} \ho^{-\varepsilon} | \tilde\eta_{o} |_{H^{1/2 - \varepsilon}(\partial\omk)}
\lesssim \po^{2\varepsilon} \ho^{-\varepsilon}| \etao
|_{H^{1/2-\varepsilon}_0(E)},\label{36}
\end{equation}
where the second inequality follows from the boundedness from $H^{1/2}_{00}(E)$ to $H^{1/2}(\partial\omk)$ of the extension by $0$.

To estimate now the $H^{1/2-\varepsilon}_0(E)$ seminorm of $\etao$ we observe that
\nref{penultima} rescales as
\[
| \varphi |_{H^{1/2-\varepsilon}_0(E)} \lesssim \frac{\Ho^{\varepsilon}} \varepsilon \left(
  \Ho^{-1/2} \| \varphi - \beta \|_{L^2(E)} + | \varphi - \beta
|_{H^{1/2}(E)} \right)  + \frac{\Ho^{\varepsilon}}{\sqrt{\varepsilon}} | \beta |.
\]
Taking now $\varphi = \etao$ and choosing $\beta$ as its average  on $E$, the first term on the right hand side above is bounded by means of
Poincar\'e-Friederichs inequality, and the second by means of estimate
 (\ref{eq:21b}), which holds since $\etao(a) = 0$. Hence, we
get
\[
\|  \etao \|_{H^{1/2}_{00}(E)} \lesssim  \frac{\Ho^{\varepsilon} \po^{2\varepsilon}\ho^{-\varepsilon}} \varepsilon
| \etao |_{H^{1/2}(E)} +  \frac{\Ho^{\varepsilon}\po^{2\varepsilon} \ho^{-\varepsilon}}{\sqrt{\varepsilon}}
\| \etao - \beta \|_{L^\infty(E)}.
\]
Arguing as before and taking $\varepsilon = 1/\log(\Ho\po^{2}/\ho)$, and using bound
(\ref{eq:21})
we obtain
\[
\|  \etao \|_{H^{1/2}_{00}(E)} \lesssim  \left(1 + \log \frac {\Ho\, \po^{2}}{\ho}\right) | \etao |_{H^{1/2}(E)}.
\]
Finally, since
$\sum_{E\subset \partial\omk} | \cdot |_{H^{1/2}(E)}^2 \lesssim | \cdot
|_{H^{1/2}(\partial\omk)}^2,$
by squaring and taking the sum over $E\subset \partial\omk$,
we obtain \eqref{eq:4}.
\end{proof}

\

We are now able to prove Lemma \ref{lembsp2} and Lemma \ref{lembsp}.


  \begin{proof}[{\it Proof of Lemma~\ref{lembsp2}.}]
      A direct computation using the linearity of $\chi$ shows that,
    if $a_i,b_i$  are the vertices of the $i$-th subdomain edge $E^i$
    of $\omk$, we have
     $$
 | \chi^\ell |_{H^{1/2}(\partial\omk)}^2  \lesssim \sum_{i=1}^{\NE}
 (\eta^\ell(a_i) - \eta^\ell(b_i))^2$$
with $\NE$($=3$ or $4$) denoting the number of subdomain edges of $\omk$.
 Now, using~ \nref{eq:19} 
 and assembling all the contributions we easily conclude that the thesis
 holds.

\end{proof}

\begin{proof}[{\it Proof of Lemma~\ref{lembsp}}]
 Let $\zeta_0 \in \Tk$ be the unique element of $\Tk$
satisfying $\zeta_0(a) = 0$  for all vertices $a$  of $\omk$ and
$(\zeta_0,\tau)_{H^{1/2}(\partial\omk)} = (\zeta_L,\tau)_{H^{1/2}(\partial\omk)}$
for all $\tau \in \Tk$ with  $\tau(a) = 0$  for all vertices  $a$ of
$\omk$.
It is not difficult to see that  $| \cdot
|_{H^{1/2}(\partial\omk)}$ is a norm on the subspace of functions in $\Tk$
vanishing at the vertices of $\omk$ and then, by standard arguments we get
that $\zeta_0$ is well defined and
$| \zeta_0 |_{H^{1/2}(\partial\omk)} \lesssim | \zeta_L |_{H^{1/2}(\partial\omk)} $.
  Now we can write:
  \begin{equation}
    \label{eq:6}
    \sum_{i=1}^{\NE} \| \xi \|^2_{H^{1/2}_{00}(\gki)} \lesssim
   \sum_{i=1}^{\NE}  \| \xi + \ziz\|^2_{H^{1/2}_{00}(\gki)} +
    \sum_{i=1}^{\NE} \| \ziz \|^2_{H^{1/2}_{00}(\gki)},
  \end{equation}
with $\NE$ number of subdomain edges of $\omk$.
The first sum on the right hand side of (\ref{eq:6}) can be bound by using the previous lemma as
\begin{align*}
  \sum_{i=1}^{\NE} \| \xi + \ziz\|^2_{H^{1/2}_{00}(\gki)} &\lesssim
  \faclogi^2 | \xi + \ziz |^2_{H^{1/2}(\partial\omk)} \\
&  \lesssim \faclogi^2 | \xi + \lin |^2_{H^{1/2}(\partial\omk)},
\end{align*}
where on one hand we used Poincar{\'e} inequality to bound the $H^{1/2}$
norm of $\xi + \ziz$ (which vanishes at the vertices of $\omk$) by the
corresponding seminorm, while the last inequality follows by observing that,
by the definition
of $\ziz$,
$\xi + \ziz \in \Tk$ vanishes at the vertices of
$\omk$ and satisfies $(\lin - \ziz,\xi+\ziz)_{H^{1/2}(\partial\omk)} = 0$.
Hence, we have
 $$| \xi + \lin |^2_{H^{1/2}(\partial\omk)} =
       | \xi + \ziz |^2_{H^{1/2}(\partial\omk)}+ |  \lin - \ziz |^2_{H^{1/2}(\partial\omk)} \geq   | \xi + \ziz |^2_{H^{1/2}(\partial\omk)}. $$
Let us now bound the second sum on the right hand side of (\ref{eq:6}):
we first observe that
\begin{equation*}
\| \ziz \|^2_{H^{1/2}_{00}(\gki)} = | \ziz |^2_{H^{1/2}(\gki)}+\Iuno(\ziz) + \Idue(\ziz),
\end{equation*}
having set
\begin{equation*}
\begin{aligned}
& \Iuno(\ziz) = \int_\aai^\bbi \frac {| \ziz(x) |^2} {|x - \aai|} \dd{x},
&&\Idue(\ziz) = \int_\aai^\bbi \frac {| \ziz(x) |^2} {|x - \bbi|} \dd{x},
\end{aligned}
\end{equation*}
with $\aai$ and $\bbi$ the two vertices of
the subdomain edge $\gki$. Now we can write
\begin{align*}
         \sum_{i=1}^{\NE}  | \ziz |^2_{H^{1/2}(\gki)} & \lesssim
         | \ziz |^2_{H^{1/2}(\partial\omk)} \lesssim
         | \lin |^2_{H^{1/2}(\partial\omk)} \lesssim
         \sum_{i=1}^{\NE} (\lin(\aai) - \lin(\bbi))^2 \\
         & \lesssim
\faclogi
| \xi + \lin |^2_{H^{1/2}(\partial\omk)},
\end{align*}
where the inequality $ | \lin |^2_{H^{1/2}(\partial\omk)}\lesssim \sum_{i=1}^{\NE}
(\lin(\aai) - \lin(\bbi))^2$ is proven in \cite{BPS} by direct computation,
and the last inequality follows by applying the bound of Lemma~\ref{lemmainf}-{\it (ii)} to the function $(\xi + \lin)(x) - (\xi + \lin)(\bbi)$.

Let us now bound $\Iuno$. For notational simplicity let us identify $\aai = 0$ and $\bbi = H$. Adding and subtracting $\lin(x) + \lin(0)$ and using the Cauchy-Schwarz inequality, we have
\begin{equation}\label{eq:esti_I1}
\Iuno(\ziz) =
\int_0^H \frac {|\ziz(x)|^2} {|x|} \dd{x}
\lesssim
\int_0^H \frac {|\ziz(x) - \lin(x) + \lin(0)|^2} {|x|} \dd{x}
+\int_0^H \frac {|\lin(x) - \lin(0)|^2} {|x|} \dd{x}.
\end{equation}
\newcommand{\ziort}{\zeta_\perp}
Let us bound the first integral  on the right hand side of \nref{eq:esti_I1}. Setting $\ziort = \ziz - \lin$, we have
\[
\int_0^H \frac {| \ziort(x) - \ziort(0) |^2} {|x|} \dd{x} =
\int_0^h \frac {| \ziort(x) - \ziort(0) |^2} {|x|} \dd{x} +
\int_h^H \frac {| \ziort(x) - \ziort(0) |^2} {|x|} \dd{x}.\]
The first term can be bounded by
\begin{eqnarray*}
\int_0^h \frac {| \ziort(x) - \ziort(0) |^2} {|x|} \dd{x} =
\int_0^h \frac {| \int_0^x (\ziort)_x(\tau) \,d\tau |^2} {|x|} \dd{x}
\lesssim
h | \ziort |^2_{H^1(\gki)} \lesssim | \ziort |^2_{H^{1/2}(\gki)},
\end{eqnarray*}
while we bound the second term by
\begin{align*}
\int_h^H \frac {| \ziort(x) - \ziort(0) |^2} {|x|} \dd{x}
\lesssim &\| \ziort - \ziort(0) \|^2_{L^{\infty}(\gki)}
\log \left( \frac{\Ho\,\po^2}{\ho}\right) \\
\lesssim &
\left( \log  \left( \frac{\Ho\,\po^2}{\ho}\right) \right)^2 | \ziort |^2_{H^{1/2}(\gki)}.
\end{align*}
  Next, we estimate the second integral on the right hand side of \nref{eq:esti_I1}. By direct calculation and using the linearity of $\lin$, we have
\[
\int_0^H \frac {|\lin(x) - \lin(0)|^2} {|x|} \dd{x} \lesssim (\lin(\bi) - \lin(\ai))^2
\lesssim
\log  \left( \frac{\Ho\,\po}{\ho}  \right)\,|  \lin + \xi |^2_{H^{1/2}(\gki)}.
\]
Hence, we conclude that
 $$ \Iuno(\ziz) \lesssim \faclogi^2
    |  \ziz - \lin  |^2_{H^{1/2}(\gki)} +  \log \left(
    \frac{\Ho\,\po^2}{\ho}  \right)\,|  \lin + \xi  |^2_{H^{1/2}(\gki)}.
$$
 The term $\Idue$ can be bounded by the same argument. Collecting all the previous estimates the thesis follows.
\end{proof}

\section*{Acknowledgments}
The work of P.F. Antonietti and B. Ayuso de Dios was partially
supported by Azioni Integrate Italia-Spagna through the projects
IT097ABB10 and HI2008-0173. 
B. Ayuso de Dios was also partially supported by grants  MINECO MTM2011-27739-C04-04
 and GENCAT 2009SGR-345.
Part of this work was done during several visits of B. Ayuso de Dios  to the Istituto {\it Enrico Magenes} IMATI-CNR at Pavia (Italy). She thanks the IMATI for the everlasting kind hospitality.  


\begin{thebibliography}{10}

\bibitem{AMW}
Yves Achdou, Yvon Maday, and Olof~B. Widlund.
\newblock Iterative substructuring preconditioners for mortar element methods
  in two dimensions.
\newblock {\em SIAM J. Numer. Anal.}, 36(2):551--580 (electronic), 1999.

\bibitem{ainsworthBPS}
Mark Ainsworth.
\newblock A preconditioner based on domain decomposition for {$h$}-{$p$}
  finite-element approximation on quasi-uniform meshes.
\newblock {\em SIAM J. Numer. Anal.}, 33(4):1358--1376, 1996.

\bibitem{AntoniettiAyuso2007}
Paola~F. Antonietti and Blanca Ayuso.
\newblock Schwarz domain decomposition preconditioners for discontinuous
  {G}alerkin approximations of elliptic problems: non-overlapping case.
\newblock {\em M2AN Math. Model. Numer. Anal.}, 41(1):21--54, 2007.

\bibitem{AntoniettiAyuso2008}
Paola~F. Antonietti and Blanca Ayuso.
\newblock Multiplicative {S}chwarz methods for discontinuous {G}alerkin
  approximations of elliptic problems.
\newblock {\em M2AN Math. Model. Numer. Anal.}, 42(3):443--469, 2008.

\bibitem{AntoniettiAyuso2009}
Paola~F. Antonietti and Blanca Ayuso.
\newblock Two-level {S}chwarz preconditioners for super penalty discontinuous
  {G}alerkin methods.
\newblock {\em Commun. Comput. Phys.}, 5(2-4):398--412, 2009.

\bibitem{AntoniettiHouston2011}
Paola~F. Antonietti and Paul Houston.
\newblock A class of domain decomposition preconditioners for
  {$hp$}-discontinuous {G}alerkin finite element methods.
\newblock {\em J. Sci. Comput.}, 46(1):124--149, 2011.

\bibitem{abcm}
Douglas~N. Arnold, Franco Brezzi, Bernardo Cockburn, and L.~Donatella Marini.
\newblock Unified analysis of discontinuous {G}alerkin methods for elliptic
  problems.
\newblock {\em SIAM J. Numer. Anal.}, 39(5):1749--1779, 2001/02.

\bibitem{abfm0}
Douglas~N. Arnold, Franco Brezzi, Richard~S. Falk, and L.~Donatella Marini.
\newblock Locking-free {R}eissner-{M}indlin elements without reduced
  integration.
\newblock {\em Comput. Methods Appl. Mech. Engrg.}, 196(37-40):3660--3671,
  2007.

\bibitem{abm0}
Douglas~N. Arnold, Franco Brezzi, and L.~Donatella Marini.
\newblock A family of discontinuous {G}alerkin finite elements for the
  {R}eissner-{M}indlin plate.
\newblock {\em J. Sci. Comput.}, 22/23:25--45, 2005.

\bibitem{jump}
B.~Ayuso~de Dios, M.~Holst, Y.~Zhu, and L.~Zikatanov.
\newblock Multilevel preconditioners for discontinuous {G}alerkin
  approximations of elliptic problems with jump coefficients.
\newblock {\em Math. Comp.}, to appear.

\bibitem{AyusoB_GeorgievI_KrausJ_ZikatanovL-2009aa}
Blanca Ayuso~de Dios, Ivan Georgiev, Johannes Kraus, and Ludmil Zikatanov.
\newblock Subspace correction methods for discontinuous {G}alerkin methods
  discretizations for linear elasticity equations.
\newblock submitted, 2011.

\bibitem{AyusoZikatanov2009}
Blanca Ayuso~de Dios and Ludmil Zikatanov.
\newblock Uniformly convergent iterative methods for discontinuous {G}alerkin
  discretizations.
\newblock {\em J. Sci. Comput.}, 40(1-3):4--36, 2009.

\bibitem{BarkerBrennerParkSung2011}
Andrew~T. Barker, Susanne~C. Brenner, Park Eun-Hee, and Li-Yeng Sung.
\newblock Two-level additive {S}chwarz preconditioners for a weakly
  over-penalized symmetric interior penalty method.
\newblock {\em J. Sci. Comp.}, 47:27--49, 2011.

\bibitem{Stenberg.2003}
Roland Becker, Peter Hansbo, and Rolf Stenberg.
\newblock A finite element method for domain decomposition with non-matching
  grids.
\newblock {\em M2AN Math. Model. Numer. Anal.}, 37(2):209--225, 2003.

\bibitem{Bsubstr}
Silvia Bertoluzza.
\newblock Substructuring preconditioners for the three fields domain
  decomposition method.
\newblock {\em Math. Comp.}, 73(246):659--689 (electronic), 2004.

\bibitem{BertPenn}
Silvia Bertoluzza and Micol Pennacchio.
\newblock Analysis of substructuring preconditioners for mortar methods in an
  abstract framework.
\newblock {\em Appl. Math. Lett.}, 20(2):131--137, 2007.

\bibitem{Bjorstad:1986:IMS}
Petter~E. Bj{\o}rstad and Olof~B. Widlund.
\newblock Iterative methods for the solution of elliptic problems on regions
  partitioned into substructures.
\newblock {\em SIAM J. Numer. Anal.}, 23(6):1093--1120, 1986.

\bibitem{BPS}
James~H. Bramble, Joseph~E. Pasciak, and Alfred~H. Schatz.
\newblock The construction of preconditioners for elliptic problems by
  substructuring. {I}.
\newblock {\em Math. Comp.}, 47(175):103--134, 1986.

\bibitem{BPSIV}
James~H. Bramble, Joseph~E. Pasciak, and Alfred~H. Schatz.
\newblock The construction of preconditioners for elliptic problems by
  substructuring. {IV}.
\newblock {\em Math. Comp.}, 53(187):1--24, 1989.

\bibitem{BrennerWang05}
Susanne~C. Brenner and Kening Wang.
\newblock Two-level additive {S}chwarz preconditioners for {$C\sp 0$} interior
  penalty methods.
\newblock {\em Numer. Math.}, 102(2):231--255, 2005.

\bibitem{BrennerZhao_2005}
Susanne~C. Brenner and Jie Zhao.
\newblock Convergence of multigrid algorithms for interior penalty methods.
\newblock {\em Appl. Numer. Anal. Comput. Math.}, 2(1):3--18, 2005.

\bibitem{dahmen1}
Kolja Brix, Martin Campos~Pinto, and Wolfgang Dahmen.
\newblock A multilevel preconditioner for the interior penalty discontinuous
  {G}alerkin method.
\newblock {\em SIAM J. Numer. Anal.}, 46(5):2742--2768, 2008.

\bibitem{luca}
Claudio Canuto, Luca~F. Pavarino, and Alexandre~B. Pieri.
\newblock {BDDC} preconditioners for {C}ontinuous and {D}iscontinuous
  {G}alerkin methods using spectral/hp elements with variable local polynomial
  degree.
\newblock Technical report, 2012.
\newblock (submitted).

\bibitem{hybrid0}
Bernardo Cockburn, Jayadeep Gopalakrishnan, and Raytcho Lazarov.
\newblock Unified hybridization of discontinuous {G}alerkin, mixed, and
  continuous {G}alerkin methods for second order elliptic problems.
\newblock {\em SIAM J. Numer. Anal.}, 47(2):1319--1365, 2009.

\bibitem{dominik0}
Bernardo Cockburn, Guido Kanschat, and Dominik Sch{\"o}tzau.
\newblock A note on discontinuous {G}alerkin divergence-free solutions of the
  {N}avier-{S}tokes equations.
\newblock {\em J. Sci. Comput.}, 31(1-2):61--73, 2007.

\bibitem{Demmel_1997}
James~W. Demmel.
\newblock {\em Applied Numerical Linear Algebra}.
\newblock SIAM, 1997.

\bibitem{Dobrev_et_al_2006}
Veselin~A. Dobrev, Raytcho~D. Lazarov, Panayot~S. Vassilevski, and Ludmil~T.
  Zikatanov.
\newblock Two-level preconditioning of discontinuous {G}alerkin approximations
  of second-order elliptic equations.
\newblock {\em Numer. Linear Algebra Appl.}, 13(9):753--770, 2006.

\bibitem{Dryja:1981:CMM}
Maksymilian Dryja.
\newblock A capacitance matrix method for {D}irichlet problem on polygon
  region.
\newblock {\em Numer. Math.}, 39:51--64, 1982.

\bibitem{Dryja.Galvis.Sarkis.2007}
Maksymilian Dryja, Juan Galvis, and Marcus Sarkis.
\newblock B{DDC} methods for discontinuous {G}alerkin discretization of
  elliptic problems.
\newblock {\em J. Complexity}, 23(4-6):715--739, 2007.

\bibitem{sarkis1}
Maksymilian Dryja, Juan Galvis, and Marcus Sarkis.
\newblock {Neumann-Neumann} methods for a {DG} discretization of elliptic
  problems with discontinuous coefficients on geometrically nonconforming
  substructures.
\newblock {\em {Numer. Methods Partial Differential Equations}},
  28(4):1194--1226, 2012.

\bibitem{sarkis2}
Maksymilian Dryja and Marcus Sarkis.
\newblock {FETI-DP} method for {DG} discretization of elliptic problems with
  discontinuous coefficients.
\newblock Technical report, Instituto de Matematica Pura e Aplicada, Brazil,
  2010.
\newblock submitted.

\bibitem{Dryja.Smith.Widlund94}
Maksymilian Dryja, Barry~F. Smith, and Olof~B. Widlund.
\newblock Schwarz analysis of iterative substructuring algorithms for elliptic
  problems in three dimensions.
\newblock {\em SIAM J. Numer. Anal.}, 31(6):1662--1694, 1994.

\bibitem{duarte1}
Eduardo~Gomes Dutra~do Carmo and Andr{\'e} Vinicius~Celani Duarte.
\newblock A discontinuous finite element-based domain decomposition method.
\newblock {\em Comput. Methods Appl. Mech. Engrg.}, 190(8-10):825--843, 2000.

\bibitem{hybrid2}
Herbert Egger and Joachim Sch{\"o}berl.
\newblock A hybrid mixed discontinuous {G}alerkin finite-element method for
  convection-diffusion problems.
\newblock {\em IMA J. Numer. Anal.}, 30(4):1206--1234, 2010.

\bibitem{FengKarakashian01}
Xiaobing Feng and Ohannes~A. Karakashian.
\newblock Two-level additive {S}chwarz methods for a discontinuous {G}alerkin
  approximation of second order elliptic problems.
\newblock {\em SIAM J. Numer. Anal.}, 39(4):1343--1365 (electronic), 2001.

\bibitem{GolubVanLoan_1996}
Gene~H. Golub and Charles~F. Van~Loan.
\newblock {\em Matrix computations}.
\newblock Johns Hopkins Studies in the Mathematical Sciences. Johns Hopkins
  University Press, Baltimore, MD, third edition, 1996.

\bibitem{GopalakrishnanKanschat2003}
J.~Gopalakrishnan and G.~Kanschat.
\newblock A multilevel discontinuous {G}alerkin method.
\newblock {\em Numer. Math.}, 95(3):527--550, 2003.

\bibitem{hansbo-elasticity}
Peter Hansbo and Mats~G. Larson.
\newblock Discontinuous {G}alerkin methods for incompressible and nearly
  incompressible elasticity by {N}itsche's method.
\newblock {\em Comput. Methods Appl. Mech. Engrg.}, 191(17-18):1895--1908,
  2002.

\bibitem{mika0}
Mika Juntunen and Rolf Stenberg.
\newblock Nitsche's method for general boundary conditions.
\newblock {\em Math. Comp.}, 78(267):1353--1374, 2009.

\bibitem{guido-riviere}
G.~Kanschat and B.~Rivi{\`e}re.
\newblock A strongly conservative finite element method for the coupling of
  {S}tokes and {D}arcy flow.
\newblock {\em J. Comput. Phys.}, 229(17):5933--5943, 2010.

\bibitem{Kanschat2003}
Guido Kanschat.
\newblock Preconditioning methods for local discontinuous {G}alerkin
  discretizations.
\newblock {\em SIAM J. Sci. Comput.}, 25(3):815--831, 2003.

\bibitem{mandel2}
Jan Mandel.
\newblock Iterative solvers by substructuring for the {$p$}-version finite
  element method.
\newblock {\em Comput. Methods Appl. Mech. Engrg.}, 80(1-3):117--128, 1990.
\newblock Spectral and high order methods for partial differential equations
  (Como, 1989).

\bibitem{nitsche0}
J.~Nitsche.
\newblock \"{U}ber ein {V}ariationsprinzip zur {L}\"osung von
  {D}irichlet-{P}roblemen bei {V}erwendung von {T}eilr\"aumen, die keinen
  {R}andbedingungen unterworfen sind.
\newblock {\em Abh. Math. Sem. Univ. Hamburg}, 36:9--15, 1971.
\newblock Collection of articles dedicated to Lothar Collatz on his sixtieth
  birthday.

\bibitem{pavarino1}
Luca Pavarino.
\newblock {\em Domain Decomposition Algorithms for the p-version Finite Element
  Method for Elliptic Problem}.
\newblock PhD thesis, Courant Institute of Mathematical Sciences, New York
  University, New York, NY, 1992.

\bibitem{Pennacchio.08}
Micol Pennacchio.
\newblock Substructuring preconditioners for parabolic problems by the mortar
  method.
\newblock {\em Int. J. Numer. Anal. Model.}, 5(4):527--542, 2008.

\bibitem{Pennacchio.Simoncini.08}
Micol Pennacchio and Valeria Simoncini.
\newblock Substructuring preconditioners for mortar discretization of a
  degenerate evolution problem.
\newblock {\em J. Sci. Comput.}, 36(3):391--419, 2008.

\bibitem{eun00}
Eun-Hee S.C.~Brenner and Li~yeng Sung.
\newblock A bddc preconditioner for a symmetric interior penalty galerkin
  method.
\newblock Technical report.
\newblock in preparation.

\bibitem{schoberl}
Joachim Sch{\"o}berl and Christoph Lehrenfeld.
\newblock Domain decomposition preconditioning for high order hybrid
  discontinuous {G}alerkin methods on tetrahedral meshes.
\newblock In {\em Advanced finite element methods and applications}, volume~66
  of {\em Lect. Notes Appl. Comput. Mech.}, pages 27--56. Springer, Heidelberg,
  2013.

\bibitem{Schwab98}
Ch. Schwab.
\newblock {\em {$p$}- and {$hp$}-finite element methods}.
\newblock Numerical Mathematics and Scientific Computation. The Clarendon Press
  Oxford University Press, New York, 1998.
\newblock Theory and applications in solid and fluid mechanics.

\bibitem{Stenberg.1998}
Rolf Stenberg.
\newblock Mortaring by a method of {J}. {A}. {N}itsche.
\newblock In {\em Computational mechanics ({B}uenos {A}ires, 1998)}, pages
  CD--ROM file. Centro Internac. M\'etodos Num\'er. Ing., Barcelona, 1998.

\bibitem{Toselli:2004:DDM}
Andrea Toselli and Olof Widlund.
\newblock {\em Domain decomposition methods---algorithms and theory}, volume~34
  of {\em Springer Series in Computational Mathematics}.
\newblock Springer-Verlag, Berlin, 2005.

\bibitem{Xu.Zou}
Jinchao Xu and Jun Zou.
\newblock Some nonoverlapping domain decomposition methods.
\newblock {\em SIAM Rev.}, 40(4):857--914, 1998.

\end{thebibliography}

\end{document}